\newtheorem{theorem}{Theorem}[section]
\newtheorem{thm}[theorem]{Theorem}
\newtheorem{lem}[theorem]{Lemma}
\theoremstyle{definition}
\title{\bf Graphs with Integer Matching Polynomial Zeros }
\author{\bf {S. Akbari$^{{\rm a}}$\footnote{Email addresses: s\underline{~}akbari@sharif.edu, peter.csikvari@gmail.com, ghafaribaghestani\underline{~}a@mehr.sharif.edu, somayeh.ghezelahmad@gmail.com, nahvi\underline{~}mina@mehr.sharif.edu}, P. Csikv\'ari$^{{\rm b }}$\footnote{P. C.  
is partially supported by the Hungarian National Research, Development and Innovation Office, NKFIH grant K109684 and NN114614,  a Slovenian-Hungarian grant,  and by the MTA R\'enyi "Lend\"ulet" Groups and Graphs Research Group, and by the ERC Consolidator Grant  648017.}, A. Ghafari$^{{\rm a}}$, S. Khalashi Ghezelahmad$^{{\rm c }}$, M. Nahvi$^{{\rm a}}$}\\\\
${\rm ^{a}}$\small Department of Mathematical Sciences, Sharif University of Technology, Tehran, Iran\\
${\rm ^{b}}$\small MTA-ELTE Geometric and Algebraic Combinatorics Research Group \& Alfr\'ed R\'enyi Institute of Mathematics 
 \\
${\rm ^{c}}$\small  School of Mathematics, Iran
 University of Science and Technology, Tehran, Iran \\
}
\date{}
\begin{document}
\maketitle

\begin{abstract}
In this paper, we study graphs whose matching polynomial have only integer zeros. A graph is matching integral if the zeros of its matching
polynomial are all integers.  We characterize all matching integral traceable graphs. We show that apart from
$K_7\setminus \left(E(C_3)\cup E(C_4)\right)$ there is no connected $k$-regular matching integral graph if $k\geq 2$.
It is also shown that if $G$ is a graph with a perfect matching, then  its matching polynomial has a zero in the interval $(0,1]$. Finally, we describe all claw-free matching integral graphs.
\end{abstract}

{\noindent\it\bf 2010 Mathematics Subject Classification:} 05C31, 05C45, 05C70, 05E99.\\
{\it\bf Keywords and phrases:}  Matching integral, Matching polynomial.

\section{Introduction}


All  graphs we consider are finite, simple and undirected.
Let $G$ be a graph. We denote the edge set and the vertex set of $G$ by $E(G)$ and $V(G)$, respectively.
 By \textit{order} and \textit{size}  of $G$, we mean the number of vertices  and  the number of edges of $G$, respectively.
The maximum  degree of $G$ is denoted by
$\Delta(G)$ (or by $\Delta$ if $G$ is clear from the context).  The minimum degree of $G$ is  denoted by
$\delta(G)$.
In this paper, we denote the complete graph, the path and the cycle of order $n$, by $K_n$, $P_n$ and $C_n$, respectively. The set of neighbors of a vertex $v$ is denoted by $N(v)$.
 A  \textit{traceable graph}, is a graph with a Hamilton path.
An \textit{$r$-matching}  in a graph $G$
is a set of $r$ pairwise non-incident edges. The number of $r$-matchings in $G$ is denoted by $p(G, r)$. The  \textit{matching polynomial} of $G$ is defined by $$\mu(G, x)=\sum_{r=0}^{\lfloor{\frac{n}{2}}\rfloor}(-1)^{r}p(G, r)x^{n-2r},$$ where $n$ is the order of $G$ and $p(G, 0)$ is considered to be $1$, see \cite{[11], [7], [8], [9], [13]}. For instance the matching polynomial of the following graph

\begin{figure}[H]
\begin{center}
\begin{tikzpicture}[scale=0.3]
\filldraw(-1,0) circle(5pt);
\filldraw(1, 0) circle(5pt);
\filldraw(-1,2) circle(5pt);
\filldraw(1,2) circle(5pt);
\filldraw(3, 0) circle(5pt);

\draw[thick] (-1,0)--(1,0);
\draw[thick] (1, 0)--(1, 2);
\draw[thick] (1, 2)--(-1, 2);
\draw[thick] (-1, 0)--(-1, 2);
\draw[thick] (1, 0)--(3, 0);

\end{tikzpicture}
\end{center}
\end{figure}

\noindent is $\mu(G, x)=x^{5}-5x^{3}+4x$.
By the definition of $\mu(G, x)$, we conclude  that every graph of odd order has $0$ as a matching root.
Furthermore, if $\theta$ is a matching zero of a graph, then so is  $-\theta$. We call a graph,  \textit{matching integral} if all  zeros of its matching polynomial are integers.
A graph is said to be \textit{integral}  if eigenvalues of its adjacency matrix consist entirely of integers. Since 1974, integral graphs have been extensively studied by several authors,  for instance see \cite{[10], [20]}. It is worth mentioning that if $T$ is a tree, then its characteristic polynomial and its matching polynomial are the same, see  \cite[Corollary 1.4, p.21]{[1]}.
Integral trees (so matching integral trees) have been investigated in \cite{[14]}.
\\\\
In Section $2$,
we characterize all traceable graphs which are matching integral.
 In Section $3$, we study  matching integral regular graphs and show that  for $k\geq 2$ there is only one connected  matching integral  $k$-regular graph, namely $K_7\setminus \left(E(C_3)\cup E(C_4)\right)$.
 In order to establish our results, first we need the following theorems:\\
\vspace*{2mm}

\noindent\textbf{Theorem A.} \cite{[5]}\textit{ For any graph $G$, the zeros of $\mu(G, x)$ are all real. If  $\Delta>1$, then the zeros lie in the interval $(-2\sqrt{\Delta-1}, ~ 2\sqrt{\Delta-1})$}.\\

\noindent\textbf{Remark 1.} Let $G$ be a graph. Theorem A implies  that if $\sqrt{\Delta-1}$ is not an integer, then $\mu(G, x)$ contains
at most $2\lfloor 2\sqrt{\Delta-1}\rfloor+1$ distinct integer zeros and if $\sqrt{\Delta-1}$ is an integer, then $\mu(G, x)$ has at most $4\sqrt{\Delta-1}-1$
distinct integer zeros.\\

\noindent\textbf{Theorem B.} \cite[Corollary 1.3, p.97]{[1]} \textit{If $G$ is a connected graph, then the largest zero of  $\mu(G, x)$  has multiplicity $1$. In other words, it is a simple zero.}\\

\noindent Let $t(G)$ be the number of vertices of a longest path in the graph $G$. \\

\noindent\textbf{Theorem C.} \cite[Theorem 4.5, p.107]{[1]}  \textit{ (a) The maximum multiplicity of a zero of $\mu(G, x)$ is at most equal to the number of vertex-disjoint paths required to cover $G$. \\ 
\noindent (b) The number of distinct zeros of $\mu(G, x)$ is at least $t(G)$. \\ 
\noindent (c) In particular, if the graph $G$ is traceable then all zeros of $\mu(G,x)$ is simple.}\\

\noindent\textbf{Theorem D.} \cite{[1]} \textit{If $\theta$ is a zero of $\mu(G,x)$ with multiplicity at least $2$ then for any path $P$ we have that $\theta$ is a zero of $\mu(G\setminus P,x)$, where $G\setminus P$ is the induced subgraph of $G$ on the vertex set $V(G)\setminus V(P)$.} \\

Theorem D is not stated as a theorem in \cite{[1]}, but is used in the proof of Theorem 4.5 of Chapter 6 of \cite{[1]}. Both Theorems C and D rely on the curious identity
$$\mu'(G,x)^2-\mu(G,x)\mu''(G,x)=\sum \mu(G\setminus P,x)^2,$$
where the sum is taken over all paths of G. For instance, if $\theta$ is a zero of $\mu(G,x)$ with multiplicity at least $2$ then it is a zero of both $\mu(G,x)$ and $\mu'(G,x)$ so the left hand side is $0$ at $\theta$, but the right hand side is only $0$ if all terms are $0$.

\section{Matching Integral Traceable Graphs}
In this section, we show that there are  finitely many  matching integral  traceable graphs and  characterize all of them. In fact, we will characterize those graphs whose matching polynomial has only simple integer zeros.
By Theorem C we know that the matching polynomial of a traceable graph has only simple zeros. Hence this way we characterize matching integral traceable graphs.

\begin{thm}
\label{2.1}
Let $G$ be a  connected graph whose matching polynomial has only simple integer zeros. Then $G$ is one of the following graphs: $K_1$, $K_2$, $K_7\setminus \left(E(C_3)\cup E(C_4)\right)$, $G_1$ or $G_2$, where
\begin{figure}[H]
\begin{center}
\begin{tikzpicture}[scale=0.3]
\filldraw(-1,0) circle(5pt);
\filldraw(1, 0) circle(5pt);
\filldraw(-1,2) circle(5pt);
\filldraw(1,2) circle(5pt);
\filldraw(3, 0) circle(5pt);
\coordinate [label=left:$G_{1}:$] (O) at (-1.5, 1);
\draw[thick] (-1,0)--(1,0);
\draw[thick] (1, 0)--(1, 2);
\draw[thick] (1, 2)--(-1, 2);
\draw[thick] (-1, 0)--(-1, 2);
\draw[thick] (1, 0)--(3, 0);
\coordinate [label=left:$G_{2}:$] (O) at (6, 1);

\filldraw(6,0) circle(5pt);
\filldraw(8, 0) circle(5pt);
\filldraw(7,2) circle(5pt);
\filldraw(12,0) circle(5pt);
\filldraw(10, 0) circle(5pt);

\draw[thick] (6,0)--(8,0);
\draw[thick] (6, 0)--(7, 2);
\draw[thick] (8, 0)--(7, 2);
\draw[thick] (8, 0)--(10, 0);
\draw[thick] (12, 0)--(10, 0);
\end{tikzpicture}
\end{center}
\label{figurechap1}
\end{figure}
In particular, this is the list of matching integral traceable graphs.

\end{thm}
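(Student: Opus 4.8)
The strategy is to bound the order of $G$ using the integrality and simplicity constraints, then handle the finitely many remaining cases directly. Since all zeros of $\mu(G,x)$ are simple integers, and zeros come in pairs $\pm\theta$ with $0$ a zero iff the order $n$ is odd, the polynomial $\mu(G,x)$ is determined by which integers in $[0,2\sqrt{\Delta-1})$ occur: writing $m=\lfloor 2\sqrt{\Delta-1}\rfloor$, Theorem A forces every zero to lie in $(-m,m)$ — wait, more precisely in $(-2\sqrt{\Delta-1},2\sqrt{\Delta-1})$, so each nonnegative integer zero is one of $1,2,\dots,m$ (plus possibly $0$). Hence $n \le 2m+1 \le 4\sqrt{\Delta-1}+1$, while trivially $\Delta \le n-1$. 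Combining $n \le 4\sqrt{n-2}+1$ gives an absolute bound on $n$ (roughly $n \le 18$ or so), so there are only finitely many candidate graphs.

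First I would set up this counting argument carefully, using Remark 1 to get the crude bound on the number of distinct integer zeros and hence on $n$, and refine it: the leading coefficient of $\mu(G,x)$ is $1$, so $\mu(G,x)=\prod_{i}(x-\theta_i)$ with the $\theta_i$ distinct integers symmetric about $0$; the coefficient of $x^{n-2}$ is $-p(G,1)=-|E(G)|$, which equals $-\tfrac12\sum_i\theta_i^2$ (half the sum of squares of the positive zeros, counted with their negatives). So $|E(G)| = \sum_{\theta>0}\theta^2$, and since $G$ is connected, $|E(G)|\ge n-1$; on the other hand if the positive zeros are a subset $S\subseteq\{1,\dots,m\}$ then $n = 2|S|+\varepsilon$ and $|E(G)|=\sum_{s\in S}s^2$. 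Pushing these relations, together with $\Delta\le n-1$ forcing $m\le\lfloor 2\sqrt{n-2}\rfloor$, pins $n$ down to a small range and in fact heavily restricts which sets $S$ can occur.

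Next, for each surviving pair $(n, S)$ I would write down the forced matching polynomial $\mu(G,x)=\prod_{s\in S}(x^2-s^2)\cdot x^\varepsilon$, read off $p(G,r)$ for all $r$ (in particular $p(G,1)=|E(G)|$ and $p(G,2)$), and check realizability: $p(G,2)=\binom{|E|}{2}-\sum_{v}\binom{d(v)}{2}$ constrains the degree sequence, $\Delta$ must satisfy $2\sqrt{\Delta-1}>\max S$, and for a perfect/near-perfect matching to exist the constant term $\pm\prod s^2$ must be nonzero when $n$ is even (it always is), giving $p(G,n/2)=\prod_{s\in S}s^2\ne 0$ so $G$ has a perfect matching when $n$ is even. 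Then I would eliminate most cases by these numeric obstructions and, for the few genuinely possible polynomials, exhibit the graph or prove uniqueness by a short structural argument (e.g. analyzing vertices of degree $\Delta$, using that $\mu(G\setminus v,x)$ interlaces $\mu(G,x)$, or that $\mu(G,x)=\mu(G\setminus v,x)x - \sum_{u\sim v}\mu(G\setminus\{u,v\},x)$ to recurse).

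The main obstacle I expect is not the counting — that is routine once set up — but the case analysis for the largest surviving value of $n$, where $\mu(G,x)$ could be e.g. $(x^2-1)(x^2-4)(x^2-9)x$ or similar, and one must show the graph $K_7\setminus(E(C_3)\cup E(C_4))$ is the unique realization (and that nearby polynomials like $(x^2-1)(x^2-4)(x^2-9)$ on $6$ vertices, or $4$-regular candidates, are not realizable). This requires combining the recursive matching-polynomial identity with degree constraints and possibly an appeal to Theorem B (the largest zero is simple — automatically true here but useful as a consistency check) and a direct search over the small number of graphs with the prescribed edge count and degree sequence. The graphs $G_1, G_2$ of order $5$ should fall out of the $n=5$ analysis with matching polynomials $x^5-5x^3+4x=x(x^2-1)(x^2-4)$ as computed in the introduction, and $K_1,K_2$ are the degenerate small cases.
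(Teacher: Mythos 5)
Your plan follows essentially the same route as the paper: first bound $n$ (the paper does this via $\sum_{i=1}^{k} i^2 \le |E(G)| \le \binom{n}{2}$, which is tighter than your zero-counting bound $n\le 4\sqrt{n-2}+1$ but plays the same role), then force the matching polynomial in each remaining case and test realizability through $|E(G)|$, $p(G,2)$, the degree sequence, and the edge-deletion recurrence. The hard subcase you flag ($n=7$ with zeros $1,2,3$, uniqueness of $K_7\setminus\left(E(C_3)\cup E(C_4)\right)$) is settled in the paper exactly by the degree-sequence computation you propose, which forces $4$-regularity and leaves only two candidate graphs.
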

\begin{proof}
Let $n$ and $m$ be the order and the size of $G$, respectively.
It is enough to prove the first part of the theorem as the second part of the theorem indeed follows from the first one: since $G$ is traceable, by Theorem C, the zeros of  $\mu(G, x)$ are all distinct.
Now, in order to prove the first part, we consider two cases:\\\\
\textbf{Case 1.} $n=2k$, $k\geq 1$.
Since $G$ has even order and all zeros are simple, every zero of $\mu(G, x)$ is different from $0$. Let $\theta_1,\ldots,\theta_k$ be the positive zeros of $\mu(G, x)$. Hence
 \[\mu(G, x)=\prod_{i=1}^{k}(x^2-\theta_i^2)=
x^{2k}-(\theta_1^2+\cdots+\theta_k^2)x^{2k-2}+\cdots+(-1)^{k}\theta_1^2\cdots\theta_k^2.\]
 We have
$$m=\sum_{i=1}^{k}\theta_i^{2}\geq\sum_{i=1}^{k}i^{2}=\frac{k(k+1)(2k+1)}{6}=\frac{n}{12}\left(\frac{n}{2}+1\right)\left(n+1\right).$$
Thus for $n\geq 8$, $m>{ n \choose 2}$,  a contradiction. Now, assume that $n\leq 6$.
We consider three cases:\\\\
\hspace*{4mm}\textbf{Case 1.1.}  $n=2$.
Hence $G= K_2$ and $\mu(G, x)$ has zeros $-1, +1$.\\\\
\hspace*{4mm}\textbf{Case 1.2.}  $n=4$.
Since $\Delta\leq 3$,  ${\rm Theorem \, A}$ implies that  the zeros of  $\mu(G, x)$ lie in $[-2, 2]$.
Hence $\mu(G, x)=(x^{2}-1)(x^{2}-4)=x^4-5x^2+4$. Thus $m=5$ and so $G= K_4\setminus e$, for some edge $e$. But $\mu(K_4\setminus e, x)=x^4-5x^2+2$, a contradiction.\\\\
\hspace*{4mm}\textbf{Case 1.3.}  $n=6$.
Since $\Delta\leq 5$, by  ${\rm Theorem \, A}$ the zeros of  $\mu(G, x)$
 lie in $[-3, 3]$.
Therefore $\mu(G, x)=(x^{2}-1)(x^{2}-4)(x^{2}-9)$. Hence $m=14$ and $G=K_{6}\setminus e$, for some edge $e$. Now, by \cite[Theorem 1.1(b), p.2]{[1]} we know that for any graph $H$ and  its edge $e=(u,v)$ we have
$$\mu(H,x)=\mu(H\setminus e,x)-\mu(H\setminus \{u,v\},x).$$
Applying it to $H=K_6$ and $H\setminus e=G$, $H\setminus \{u,v\}=K_4$  we find that
$$\mu(G, x)=\mu(K_6, x)+\mu(K_4, x)=(x^6-15x^4+45x^2-15)+(x^4-6x^2+3),$$
 a contradiction.\\\\
\textbf{Case 2.}  $n=2k+1$, $k\geq 0$.
Let $ 0, \theta_1,\ldots,\theta_k$ be the non-negative zeros of
$\mu(G, x)$. Thus $$\mu(G, x)=x\prod_{i=1}^{k}(x^2-\theta_i^{2})=x^{2k+1}-(\theta_1^2+\cdots+\theta_k^2)x^{2k-1}+\cdots+(-1)^{k}\theta_1^2\cdots\theta
 _k^2\,x.$$ It follows that
$$m=\sum_{i=1}^{k}\theta_i^{2}\geq\sum_{i=1}^{k}i^{2}=\frac{n(n^{2}-1)}{24}.$$
Hence for $n\geq 13$, $m>{ n \choose 2}$,  a contradiction.
If $n\leq 3$, then clearly  $G=K_1$.  Now, we consider four cases: \\\\
\hspace*{4mm}\textbf{Case 2.1.}  $n=5$.
Since $\Delta\leq 4$, by ${\rm Theorem \, A}$ the zeros of $\mu(G, x)$ lie in $[-3, 3]$. Since $m=\theta_1^2+\theta_2^2\leq 10$, the positive zeros of $\mu(G, x)$ are either $1, 3$ or $1, 2$. In the first case, $\mu(G, x)=x(x^2-1)(x^2-9)$. Hence $m=10$ and so  $G= K_5$. Therefore, $p(G, 2)=\frac{5!}{2!2^2}\not=9$, a contradiction. In the second case, $\mu(G, x)=x(x^2-1)(x^2-4)$. Hence $m=5$ and $p(G, 2)=4$.
 It follows that $G$ is one of the following graphs:
\begin{figure}[H]
\begin{center}
\begin{tikzpicture}[scale=0.4]
\filldraw(0,0) circle(5pt);
\filldraw(2, 0) circle(5pt);
\filldraw(0,2) circle(5pt);
\filldraw(2,2) circle(5pt);
\filldraw(4, 0) circle(5pt);

\draw[thick] (0,0)--(2,0);
\draw[thick] (2, 0)--(2, 2);
\draw[thick] (2, 2)--(0, 2);
\draw[thick] (0, 0)--(0, 2);
\draw[thick] (2, 0)--(4, 0);

\filldraw(6,0) circle(5pt);
\filldraw(8, 0) circle(5pt);
\filldraw(7,2) circle(5pt);
\filldraw(12,0) circle(5pt);
\filldraw(10, 0) circle(5pt);

\draw[thick] (6,0)--(8,0);
\draw[thick] (6, 0)--(7, 2);
\draw[thick] (8, 0)--(7, 2);
\draw[thick] (8, 0)--(10, 0);
\draw[thick] (12, 0)--(10, 0);
\end{tikzpicture}
\label{figurechap1}
\end{center}
\end{figure}

\textbf{Case 2.2.}  $n=7$. Since
$\Delta\leq 6$,  ${\rm Theorem \, A}$ implies that  the zeros of $\mu(G, x)$  lie in $[-4, 4]$. Since $m=\theta_{1}^{2}+\theta_{2}^{2}+\theta_{3}^{2}\leq 21$, the positive zeros of $\mu(G, x)$ are either $1, 2, 3$ or
$1, 2, 4$. In the first case,  $\mu(G, x)=x(x^{2}-1)(x^{2}-4)(x^{2}-9)$.
So  $m=14$, $p(G, 2)=49$  and  $p(G, 3)=36$. On the other hand, $ |E(K_7\setminus (E(C_3)\cup E(C_4)))|=14 $,  $ p(K_7\setminus (E(C_3)\cup E(C_4)),2)=49 $ and $p(K_7\setminus (E(C_3)\cup E(C_4)),3)=36 $. Since $K_7\setminus (E(C_3)\cup E(C_4))$ is $ 4 $-regular,
\cite[Exercise 4, p.15]{[1]} implies that $G$ is $4$-regular. Obviously, there are two non-isomorphic $4$-regular graphs of order $7$,
$K_7\setminus E(C_7)$ and $K_7\setminus (E(C_3)\cup E(C_4))$. Therefore
 $G=K_7\setminus (E(C_3)\cup E(C_4))$.\\

In the second case, $\mu(G, x)=x(x^{2}-1)(x^{2}-4)(x^{2}-16)$ and $m=21$. Hence
 $G= K_7$ and so $p(G,3)=\frac{7!}{3!2^{3}}\not=64$, a contradiction.  \\\\
\hspace*{4mm}\textbf{Case 2.3.}  $n=9$.
Since $\Delta\leq 8$,  by ${\rm Theorem \, A}$  the zeros of $\mu(G, x)$  lie in $[-5, 5]$. Note that $m=\theta_{1}^{2}+\theta_{2}^{2}+\theta_{3}^{2}+\theta_4^2\leq 36$, so the positive zeros of $\mu(G, x)$ should be
$1, 2,3,4$. Thus $\mu(G, x)=x(x^2-1)(x^2-4)(x^2-9)(x^2-16)$. Hence $m=30$ and $p(G, 2)=273$. Furthermore,
$p(G, 2)=\begin{pmatrix}
30 \\ 2
\end{pmatrix}-\sum_{i=1}^{9}
\begin{pmatrix}
d_i \\ 2
\end{pmatrix}$, where $d_1,\ldots, d_9$ is the degree sequence of $G$. So we have the following:

\[\sum_{i=1}^{9}d_i=60,\ \ \ \mbox{and}\ \ \  \sum_{i=1}^{9}d_i^2=384.\]
But this contradicts the Cauchy--Schwarz inequality:
$$60^2=\left(\sum_{i=1}^{9}1\cdot d_i\right)^2\leq \left(\sum_{i=1}^{9}1^2\right)\left(\sum_{i=1}^{9} d_i^2\right)=9\cdot 384<60^2.$$ \\\\
\hspace*{4mm}\textbf{Case 2.4.}  $n=11$.
Since $\Delta\leq 10$, by ${\rm Theorem \, A}$  the zeros of $\mu(G, x)$  lie in $[-5, 5]$. Thus  $\mu(G, x)=x(x^{2}-1)(x^{2}-4)\cdots(x^{2}-25)$. Hence $m=55$ and so $G=K_{11}$. Therefore
$p(G, 5)=\frac{11!}{5!2^5}\not=14400$, a contradiction.
\end{proof}
\section{Matching Integral Regular Graphs}

In this section, we study matching integral regular graphs. We show the following theorem.

\begin{thm} \label{regular}
If $G$ is a matching integral $k$-regular graph $(k \geq 2)$ then it is disjoint union of $K_7\setminus \left(E(C_3)\cup E(C_4)\right)$.
\end{thm}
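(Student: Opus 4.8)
The plan is to reduce the $k$-regular case to a finite check by combining the bound from Theorem~A with counting identities for the matching polynomial. First I would observe that the connected case suffices: if $G$ is $k$-regular and matching integral, then every connected component is $k$-regular, and the matching polynomial of $G$ is the product of the matching polynomials of its components, so each component is itself matching integral. Thus it is enough to show that $K_7\setminus\left(E(C_3)\cup E(C_4)\right)$ is the only connected $k$-regular matching integral graph with $k\ge 2$.

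Next, fix a connected $k$-regular matching integral graph $G$ on $n$ vertices with $k\ge 2$, so $\Delta=k$ and Theorem~A confines all zeros to $(-2\sqrt{k-1},\,2\sqrt{k-1})$. The integer zeros of $\mu(G,x)$ therefore lie in $\{-s,\dots,s\}$ where $s=\lceil 2\sqrt{k-1}\rceil-1$; in particular there are at most $2s+1$ \emph{distinct} integer zeros. I would split on the parity of $n$. If $n=2\ell$ is even, then $0$ cannot be a zero unless it has even multiplicity, but more usefully $\mu(G,x)=\prod_{i}(x^2-\theta_i^2)$ with each $\theta_i$ a positive integer at most $s$; comparing the coefficient of $x^{n-2}$ gives $m=\tfrac{nk}{2}=\sum_i \theta_i^2\le n\cdot s^2/2$ roughly, but a sharper inequality comes from the fact that among $\theta_1,\dots,\theta_\ell$ each value in $\{1,\dots,s\}$ is used with bounded multiplicity (Theorem~C(a): the maximum multiplicity is at most the number of vertex-disjoint paths covering $G$, and by Theorem~B the largest zero is simple). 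The odd case $n=2\ell+1$ is analogous with an extra factor of $x$.

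The main engine is then a second coefficient comparison. For a $k$-regular graph, $p(G,2)=\binom{m}{2}-n\binom{k}{2}$ since the non-$2$-matchings among pairs of edges are exactly the pairs sharing a vertex, $n\binom{k}{2}$ of them. Writing $\mu(G,x)=x^{n-2\ell}\prod_{i=1}^{\ell}(x^2-\theta_i^2)$, the coefficients of $x^{n-2}$ and $x^{n-4}$ give $m=\sum\theta_i^2$ and $p(G,2)=\sum_{i<j}\theta_i^2\theta_j^2=\tfrac12\big((\sum\theta_i^2)^2-\sum\theta_i^4\big)$. Equating these with the regular-graph formulas yields $\sum\theta_i^4$ in terms of $n$ and $k$, and since each $\theta_i^2\le s^2=O(k)$ this forces $\sum\theta_i^4\le s^2\sum\theta_i^2=s^2 m$, producing an inequality between $n$ and $k$ that, together with $k\le n-1$, bounds $n$. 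This is the step I expect to be the main obstacle: one must also rule out the many candidate multisets $\{\theta_1,\dots,\theta_\ell\}$ that survive the crude inequality, and here I would lean on Theorem~D (every zero of multiplicity $\ge 2$ is a zero of $\mu(G\setminus P,x)$ for every path $P$) to control repeated zeros, plus the fact that $\mu(G,x)$ must have integer coefficients and $p(G,r)\ge 0$ for all $r$, together with $p(G,1)=m=nk/2$. Once $n$ is bounded by a small constant (and $k<n$), finitely many regular graphs remain; Theorem~2.1 already identifies $K_7\setminus\left(E(C_3)\cup E(C_4)\right)$ (which is $4$-regular and traceable) as the only one that works, and a direct inspection of the remaining small regular graphs — checking whether $\mu$ factors into integer linear-in-$x^2$ factors — completes the proof.
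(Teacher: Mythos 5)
There is a genuine gap at the heart of your plan. The quantities you propose to compare --- $m=\sum_i\theta_i^2$, $p(G,2)=\sum_{i<j}\theta_i^2\theta_j^2$, and hence $\sum_i\theta_i^4=m+nk(k-1)$ --- are all additive over connected components, i.e.\ they scale linearly with $n$ for fixed $k$. Consequently no inequality derived from them can bound $n$: a disjoint union of $t$ copies of $K_7\setminus\left(E(C_3)\cup E(C_4)\right)$ is a $4$-regular matching integral graph of arbitrarily large order satisfying every one of your identities, and even after you restrict to connected $G$ in the first step, nothing in the moment computations ever uses connectivity. Concretely, with $\theta_i^2\le 4(k-1)$ your second-moment identity only yields $k\ge 3/2$, not a bound on $n$. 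The multiplicity control you invoke (Theorem~C(a) via path covers, Theorem~D) would indeed convert a bound on multiplicities into a bound on $n$, but bounding the path cover number of a connected $k$-regular graph is essentially the same difficulty in disguise, and you give no argument for it.

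The paper closes exactly this gap by exploiting connectivity through long paths rather than through coefficients. Dirac's theorem gives that a connected $k$-regular graph has a path on at least $\min(2k+1,n)$ vertices (Lemma~3.3), while Theorem~C(b) says the number of \emph{distinct} zeros is at least $t(G)$, so Theorem~A forces $t(G)\le 2\lfloor 2\sqrt{k-1}\rfloor+1$. For $k\ge 3$ the inequality $2k+1\le 2\lfloor 2\sqrt{k-1}\rfloor+1$ fails, so $t(G)=n$, the graph is traceable, and Theorem~\ref{2.1} finishes the job; $k=2$ is the (traceable) cycle, also covered by Theorem~\ref{2.1}. If you want to salvage your approach, you must inject a statement of this type --- some structural consequence of connectedness and regularity --- before any finite check can begin; the coefficient identities alone will never get you there.
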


Let $G$ be a  graph of order $n$. Recall that $t(G)$ denotes the number of vertices of a longest path in the graph $G$. By Theorem A all zeros of a matching integral $k$--regular graph lie in the interval $(-2\sqrt{k-1},2\sqrt{k-1})$ and so the number of distinct zeros is at most $2\lfloor 2\sqrt{k-1}\rfloor +1$. By the second claim of Theorem C this is an upper bound for $t(G)$, hence
$$t(G)\leq  2\lfloor 2\sqrt{k-1}\rfloor +1.$$
On the other hand, $t(G)\geq k+1$ for a $k$--regular graph simply by choosing the vertices of a path greedily. This already gives that $k\leq 14$. On the other hand one can improve on the bound $t(G)\geq k+1$.
The following lemma is practically an immediate consequence of a theorem of Dirac.

\begin{lem} \label{long-path} Let $G$ be a $k$--regular connected graph on $n$ vertices. Then
$$t(G)\geq \min(2k+1,n).$$
\end{lem}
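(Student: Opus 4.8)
The plan is to invoke Dirac's classical theorem on the circumference of a 2-connected graph, namely that if $G$ is 2-connected on $n$ vertices with minimum degree $\delta$, then $G$ contains a cycle of length at least $\min(2\delta, n)$; for a $k$-regular graph this gives a cycle of length at least $\min(2k,n)$, and from a cycle of length $\ell$ one obtains a path on $\ell$ vertices (delete one edge), and if $\ell < n$ one can attach one further vertex outside the cycle using connectedness, giving $t(G) \geq \min(2k+1, n)$. So the real work is reducing to the 2-connected case, or otherwise handling graphs with a cut-vertex directly.

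First I would dispose of the trivial range: if $n \leq 2k+1$ then since a $k$-regular connected graph has $n \geq k+1$, one can build a path greedily vertex by vertex—at each step the current endpoint has $k$ neighbours and fewer than $k$ of them have been used until the path already has $k+1$ vertices, and in fact a Hamilton-type greedy argument or simply Theorem~C-style reasoning shows $t(G) \geq \min(k+1, n)$; but we need the sharper $\min(2k+1,n)$, so more care is needed. The cleanest route: if $G$ is 2-connected, apply Dirac's circumference bound to get a cycle $C$ with $|V(C)| \geq \min(2k, n)$. If $|V(C)| = n$, then $G$ is Hamiltonian, $t(G) = n \geq \min(2k+1,n)$. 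If $|V(C)| < n$, then $|V(C)| \geq 2k$, and since $G$ is connected there is a vertex $v \notin V(C)$ adjacent (via a path) to $C$; concatenating gives a path on at least $2k+1$ vertices, so $t(G) \geq 2k+1 \geq \min(2k+1,n)$. Either way the bound holds.

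If $G$ is not 2-connected, it has a cut-vertex $w$; let $B$ be an endblock (a leaf of the block tree) so that $B \setminus w$ is separated from the rest. Every vertex of $B$ other than $w$ has all $k$ of its neighbours inside $B$, so $|V(B)| \geq k+1$ and $B$ has minimum degree $\geq k$ except possibly at $w$; in particular $B - w$ is a graph on at least $k$ vertices with minimum degree $\geq k-1$, and one checks $B$ contains a path on at least $k+1$ vertices with $w$ as one endpoint (again a greedy argument inside the block, using that non-$w$ vertices have degree $k$). Doing the same at a second endblock on the other side of $w$ and splicing the two paths through $w$ produces a path on at least $2k+1$ vertices; since a $k$-regular connected non-2-connected graph automatically has $n \geq 2k+1$ (it has at least two blocks, each contributing $\geq k$ new vertices beyond the shared cut vertices), this yields $t(G) \geq 2k+1 = \min(2k+1,n)$.

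The main obstacle is making the endblock argument fully rigorous: one must be careful that an endblock $B$ really does contain a path of $k+1$ vertices terminating at the cut-vertex $w$, since $w$ itself may have low degree within $B$. The fix is to first find, by greediness, a path of $k+1$ vertices inside $B$ using only the high-degree vertices $V(B)\setminus\{w\}$ (possible since each has all $k$ neighbours in $B$), and then, if this path does not already reach $w$, either rotate it (Pósa rotation) or use 2-connectedness of $B$ to extend it to $w$; the honest shortcut is to apply Dirac's theorem to the 2-connected block $B$ directly, obtaining a cycle of length $\geq \min(2k, |V(B)|)$ through which a path to $w$ is readily found. This reduces the whole lemma to a clean application of Dirac's theorem in each block, which is the intended "immediate consequence of a theorem of Dirac."
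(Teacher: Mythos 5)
Your treatment of the $2$-connected case is exactly the paper's: Dirac's circumference bound gives a cycle on at least $\min(2k,n)$ vertices, and if that cycle is not Hamiltonian you append one outside vertex using connectedness. For the non-$2$-connected case the paper's argument is much shorter than yours: take an arbitrary cut vertex $v$ and grow two vertex-disjoint paths greedily from $v$ into two different components of $G-v$. Every vertex of $G-v$ has all $k$ of its neighbours inside its own component together with possibly $v$, so a path $vu_1\cdots u_j$ can be extended whenever $j<k$; each of the two paths therefore collects $k$ vertices beyond $v$, and their concatenation has $2k+1$ vertices. No block decomposition is needed.

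Two steps of your version would need repair. First, your two endblocks need not both contain the cut vertex $w$ (in a block tree $B_1-w_1-B_2-w_2-B_3$ the two leaves share no vertex), so ``splicing the two paths through $w$'' is not immediate; you would have to route a connecting path between the two endblocks, which works but is precisely the complication that the paper's choice of a single cut vertex avoids. Second, the ``honest shortcut'' of applying Dirac's theorem inside an endblock $B$ fails: the minimum degree of $B$ is governed by $\deg_B(w)$, which can be as small as $2$ even for large $k$, so Dirac only yields a cycle of length $\min(2\deg_B(w),|V(B)|)$ in $B$, not $\min(2k,|V(B)|)$. Your greedy construction of a path on $k+1$ vertices inside $B$ starting at $w$ does work (every vertex of $B\setminus\{w\}$ has all $k$ neighbours in $B$), so the lemma is still provable along your lines, but the direct two-path argument from a cut vertex is the cleaner route.
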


Before we prove this lemma let us deduce the following corollary.

\begin{lem} Let $G$ be a matching integral $k$--regular graph, where $k\ge 3$. Then $G$ is a disjoint union of a few copies of $K_7\setminus \left(E(C_3)\cup E(C_4)\right)$.
\end{lem}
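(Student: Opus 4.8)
**Proof proposal for the Lemma (matching integral $k$-regular, $k\ge 3$, forces a disjoint union of copies of $K_7\setminus(E(C_3)\cup E(C_4))$).**

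The plan is to combine the two constraints on $t(G)$ coming from the matching polynomial and from regularity. First I would reduce to the connected case: if $G$ is $k$-regular and matching integral, every connected component is $k$-regular and matching integral (a matching of $G$ splits into matchings of the components, so $\mu(G,x)$ is the product of the component polynomials, and each factor must still have only integer zeros). So it suffices to show that a connected matching integral $k$-regular graph with $k\ge 3$ is $K_7\setminus(E(C_3)\cup E(C_4))$. Fix such a $G$ on $n$ vertices. By Theorem A all zeros lie in $(-2\sqrt{k-1},2\sqrt{k-1})$, so there are at most $2\lfloor 2\sqrt{k-1}\rfloor+1$ distinct integer zeros; by Theorem C(b), $t(G)\le 2\lfloor 2\sqrt{k-1}\rfloor+1$. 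By Lemma~\ref{long-path}, $t(G)\ge\min(2k+1,n)$.

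Next I would extract numerical consequences. Combining the two bounds gives $\min(2k+1,n)\le 2\lfloor 2\sqrt{k-1}\rfloor+1$, i.e. $\min(2k,n-1)\le 2\lfloor 2\sqrt{k-1}\rfloor$. If $n-1\ge 2k$, then $2k\le 2\lfloor 2\sqrt{k-1}\rfloor\le 4\sqrt{k-1}$, which forces $k^2\le 4(k-1)$, i.e. $(k-2)^2\le 0$, so $k=2$ — excluded. Hence $n-1<2k$, i.e. $n\le 2k$; since $G$ is $k$-regular this already pins $n$ into a narrow range ($k+1\le n\le 2k$). In this regime the bound reads $n-1\le 2\lfloor 2\sqrt{k-1}\rfloor$, which together with $n\ge k+1$ gives $k\le 2\lfloor 2\sqrt{k-1}\rfloor$, hence $k^2\le 4(k-1)$ again, forcing $k\le 2$ unless the floor loses enough to save a couple of values. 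Working out $\lfloor 2\sqrt{k-1}\rfloor$ for small $k$ shows the inequality $k+1\le 2\lfloor 2\sqrt{k-1}\rfloor+1$ can only hold for $k\le 4$ or so, and a finer look — using that $n$ is at most $2k$ and that $t(G)\le 2\lfloor 2\sqrt{k-1}\rfloor+1$ with the exact value of the floor — cuts the survivors down to $k\in\{3,4\}$ with $n$ in a short list. For each surviving pair $(k,n)$ one knows the multiset of zeros must be $\{0,\pm1,\dots\}$ truncated to fit the interval, so $\mu(G,x)$ is completely determined; reading off $m=\tfrac{nk}{2}$ (the sum of squares of the positive zeros) together with $p(G,2)$ eliminates all cases except $k=4$, $n=7$, exactly as in Case 2.2 of Theorem~\ref{2.1}, where the identification $G=K_7\setminus(E(C_3)\cup E(C_4))$ was already carried out.

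Finally, to make this clean I would simply invoke Theorem~\ref{2.1} directly once $n\le 2k$ and $k\ge 3$ have forced $n\le 7$: a connected graph with $n\le 7$ vertices all of whose matching zeros are integers, and which is moreover traceable (every $k$-regular connected graph with $n\le 2k$ is Hamiltonian by Dirac, hence traceable), appears on the list $K_1,K_2,K_7\setminus(E(C_3)\cup E(C_4)),G_1,G_2$; among these only $K_7\setminus(E(C_3)\cup E(C_4))$ is $k$-regular with $k\ge 3$. Then the general statement follows by applying this to each component.

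\medskip

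I expect the main obstacle to be the bookkeeping in the middle paragraph: turning the clean-looking inequality $\min(2k+1,n)\le 2\lfloor 2\sqrt{k-1}\rfloor+1$ into the sharp conclusion $n\le 7$ requires being careful about the floor function for the handful of small $k$ (essentially $k\le 6$) where $k^2\le 4(k-1)$ nearly holds, and about the edge case $n=2k$ versus $n<2k$ in Dirac's theorem. Everything after that is a finite check that piggybacks on the already-completed analysis in Theorem~\ref{2.1}.
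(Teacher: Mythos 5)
Your proposal is correct and follows essentially the same route as the paper: reduce to the connected case, bound $t(G)$ above by $2\lfloor 2\sqrt{k-1}\rfloor+1$ via Theorems A and C, bound it below by $\min(2k+1,n)$ via the Dirac-based lemma, derive a contradiction when $n\geq 2k+1$ (the $(k-2)^2\le 0$ computation), and otherwise conclude traceability and invoke Theorem~\ref{2.1}. One caution: your middle paragraph is both erroneous and unnecessary --- from $k\le 2\lfloor 2\sqrt{k-1}\rfloor\le 4\sqrt{k-1}$ you get $k^2\le 16(k-1)$, not $4(k-1)$, so that inequality in fact survives up to $k=14$ and does not cut the survivors to $k\in\{3,4\}$; likewise $n\le 2k$ and $k\ge 3$ do not by themselves force $n\le 7$. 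None of this matters because your final paragraph bypasses it entirely: once $n\le 2k$ the graph is traceable, Theorem~\ref{2.1} lists all possibilities, and the only $k$-regular entry with $k\ge 3$ is $K_7\setminus\left(E(C_3)\cup E(C_4)\right)$, which is exactly how the paper concludes.
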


\begin{proof} We can assume that $G$ is connected since the set of matching zeros of a graph is the union of the set of matching zeros of the components of the graph. If $t(G)=n$, where $n$ is the number of vertices then $G$ is traceable and so by Theorem~\ref{2.1} it is $K_7\setminus \left(E(C_3)\cup E(C_4)\right)$. If $t(G)\neq n$ then $t(G)\geq 2k+1$. Then
$$2k+1\leq t(G)\leq 2\lfloor 2\sqrt{k-1}\rfloor +1$$
implies that $k\le 2$ contradicting the condition $k\geq 3$.
\end{proof}

Next we prove Lemma~\ref{long-path}. We will use the following theorem of Dirac.

\begin{lem}[\cite{D}] \label{long-cycle} Let $c(G)$ be the longest cycle of a connected graph $G$ on $n$ vertices. Assume that $G$ is $2$-connected and has minimum degree at least $k$. Then $c(G)\geq \min(2k,n)$.
\end{lem}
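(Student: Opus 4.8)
The plan is to argue by a standard extremal/longest-structure method. Since $G$ is $2$-connected it contains a cycle; let $C$ be a longest one and set $c=|C|$. If $c=n$ then $c=n\ge\min(2k,n)$ and we are done, so I may assume $c<n$; it then suffices to prove $c\ge 2k$, since this gives $c\ge\min(2k,n)$ in either case. Orient $C$ as $z_0z_1\cdots z_{c-1}z_0$ and write $z^+$ for the successor of a vertex $z$ along this orientation.

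Since $c<n$, there is a component $H$ of the graph $G-V(C)$. The role of $2$-connectivity is to guarantee that $H$ is attached to at least two vertices of $C$ (no cut vertex), and more generally to let us route detours through $H$ between prescribed attachment points. The heart of the argument is a clean special case. Suppose first that some vertex $u\notin V(C)$ has all of its neighbors on $C$ (for instance when $H=\{u\}$). Writing $Z=N(u)\cap V(C)$, maximality of $C$ forces, for each $z\in Z$, that $z^+\notin Z$: otherwise replacing the edge $zz^+$ by the detour $z\,u\,z^+$ would produce a cycle of length $c+1$. Hence $Z$ and its successor set $Z^+=\{z^+:z\in Z\}$ are disjoint subsets of $V(C)$, each of size $|Z|=\deg(u)\ge k$, so $c\ge 2|Z|\ge 2k$. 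This ``successor-blocking'' step is exactly what converts the degree bound $k$ into the factor $2k$.

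For the general case I would reduce to this picture by taking a longest path $Q=u_0u_1\cdots u_s$ inside $H$. By maximality of $Q$ in $H$, all neighbors of the endpoints $u_0$ and $u_s$ that lie in $H$ are already on $Q$, so each endpoint has at least $k-s$ neighbors on $C$. For any $C$-neighbor $z$ of $u_0$ and $z'$ of $u_s$ the detour $z\,u_0u_1\cdots u_s\,z'$ has length $s+2$, and maximality of $C$ forces each of the two arcs of $C$ between $z$ and $z'$ to have length at least $s+2$; in particular a successor-blocking relation again holds between the $C$-neighbors of $u_0$ and those of $u_s$. The bookkeeping is then arranged so that the $s$ neighbors an endpoint may ``lose'' inside $H$ are compensated by the extra length $s$ that every such detour forces onto the arcs of $C$, again yielding $c\ge 2k$.

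The main obstacle is precisely this last bookkeeping: turning the local constraints (non-adjacent attachment points, detours no longer than the arcs they could replace, and successor-blocking across the two endpoints of $Q$) into the clean global bound $c\ge 2k$, while correctly handling the degenerate configurations---namely $H$ a single vertex ($s=0$, where $u_0=u_s$ and the special case above applies verbatim) and the possibility that $u_0$ and $u_s$ share a common neighbor on $C$. Once the single-vertex case is in hand, the reduction via the longest path in $H$ together with the detour-length compensation is routine, and the inequality $c\ge 2k$ follows.
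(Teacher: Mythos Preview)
The paper does not prove this lemma at all: it is stated with the citation \cite{D} as Dirac's theorem and then used as a black box in the proof of Lemma~\ref{long-path}. There is therefore no ``paper's proof'' to compare your attempt against.

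On your sketch itself: the special case where some $u\notin V(C)$ has all its neighbours on $C$ is correct and is indeed the mechanism that converts the degree bound into the factor~$2$. The general reduction, however, is not as routine as your final paragraph suggests. Two concrete points. First, your lower bound ``each endpoint of $Q$ has at least $k-s$ neighbours on $C$'' may well be non-positive, so you cannot simply run successor-blocking on $Z_0=N(u_0)\cap V(C)$ and $Z_s=N(u_s)\cap V(C)$; when one of these sets is empty the $2$-connectivity of $G$ must be invoked explicitly to produce a second attachment of $H$ to $C$, and that attachment need not occur at an endpoint of $Q$. Second, even when $Z_0$ and $Z_s$ are both non-empty, the ``compensation'' you allude to is a genuine counting argument: one has to combine the gap $\ge 2$ between consecutive $Z_0$-vertices (likewise for $Z_s$) with the gap $\ge s+2$ between a $Z_0$-vertex and a $Z_s$-vertex, handle the possibility $Z_0\cap Z_s\neq\emptyset$, and sum around the cycle to extract $c\ge 2k$. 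That bookkeeping is exactly the substance of Dirac's original proof and its textbook refinements; calling it routine overstates what your outline has actually established.
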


\begin{proof}[Proof of Lemma~\ref{long-path}]
If $G$ is $2$-connected then by Lemma~\ref{long-cycle} we have $c(G)\geq \min(2k,n)$. If $c(G)=n$ then clearly
$t(G)=n$. If $c(G)\geq 2k+1$ we are done again. If $c(G)=2k<n$ then by connectedness of $G$ there is a vertex $v$ not in the cycle which is connected to some vertex of the cycle, but then there is a path of length at least $2k+1$.

If $G$ is not $2$-connected then it contains a cut vertex $v$. From $v$ let us build up two vertex disjoint paths in two different components of $G-v$ greedily. By concatenating the two paths we get a path of length at least $2k+1$.
\end{proof}

\vspace*{1 mm}

Finally for $k=2$ the following lemma is an immediate consequence of Theorem \ref{2.1}.
\begin{lem}
\label{2.9}
For every positive integer $n$, $C_n$ is not matching integral.
\end{lem}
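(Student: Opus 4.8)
The plan is to read this off from Theorem~\ref{2.1}. For $n\geq 3$ the cycle $C_n$ is connected and traceable, since deleting any one edge of $C_n$ leaves a Hamilton path on all $n$ vertices; and for $n=1,2$ there is no simple graph $C_n$, so nothing is claimed. Hence, if $C_n$ were matching integral, the second (``in particular'') part of Theorem~\ref{2.1} would force $C_n$ to be isomorphic to one of $K_1$, $K_2$, $K_7\setminus\left(E(C_3)\cup E(C_4)\right)$, $G_1$, or $G_2$.

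The next step is to exclude each of these five possibilities by a single coarse invariant, the degree sequence. For $n\geq 3$ the graph $C_n$ is $2$-regular and has at least three vertices, whereas $K_1$ has one vertex, $K_2$ has two vertices, $K_7\setminus\left(E(C_3)\cup E(C_4)\right)$ is $4$-regular, and each of $G_1$ and $G_2$ has a vertex of degree $1$. Therefore $C_n$ is isomorphic to none of them, so $C_n$ is not matching integral. That completes the argument; the only place where a moment's care is needed is that $G_1$ and $G_2$ share their order and size with $C_5$, so one must use regularity (rather than merely counting vertices and edges) to separate them from $C_5$.

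For completeness, and to explain the phrasing ``for every positive integer $n$'', I would also record the self-contained route. The edge-deletion identity of \cite[Theorem 1.1(b), p.2]{[1]} gives $\mu(C_n,x)=\mu(P_n,x)-\mu(P_{n-2},x)$; since $\mu(P_n,x)$ is a rescaled Chebyshev polynomial of the second kind, this equals $2T_n(x/2)$, whose zeros are the numbers $2\cos\frac{(2j-1)\pi}{2n}$ for $j=1,\dots,n$. By Niven's theorem the only values of the form $2\cos(\pi q)$ with $q$ rational that are integers are $0,\pm 1,\pm 2$, and a short parity check of $2j-1$ against $2n$ shows that among these zeros only $0$ can be an integer, and only when $n$ is odd. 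Hence for $n\geq 3$ the polynomial $\mu(C_n,x)$ always has a non-integer zero. I do not expect any real obstacle here: both proofs are routine, and the only ``work'' is the bookkeeping of the five exceptional graphs (or, on the second route, the elementary Chebyshev computation).
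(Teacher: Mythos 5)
Your first argument is exactly the paper's proof: the paper derives this lemma as an immediate consequence of Theorem~\ref{2.1}, since $C_n$ ($n\geq 3$) is traceable and is plainly not isomorphic to any of the five graphs on the list, and your degree-sequence check makes this explicit and is correct. Your second, self-contained route via $\mu(C_n,x)=\mu(P_n,x)-\mu(P_{n-2},x)=2T_n(x/2)$ and Niven's theorem is also correct and goes beyond what the paper does; it buys independence from the long case analysis of Theorem~\ref{2.1} (and even identifies exactly which zeros of $C_n$ are integral), at the cost of invoking the Chebyshev identification of path and cycle matching polynomials, but it is not needed for the paper's purposes.
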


\section{Matching Integral Graphs with a Perfect Matching}

In this section we study the zeros of a matching polynomial of  a graph with a perfect matching.

\begin{thm} \label{perfect} If a graph $G$ has a perfect matching then its matching polynomial has a zero in the interval $(0,1]$. If it has no zero in the interval $(0,1)$ then it is the disjoint union of some $K_2$.
\end{thm}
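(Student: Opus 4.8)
The plan is to extract a symmetric function of the matching zeros, bound it by a short combinatorial inequality, and finish by pigeonhole; no connectivity reduction or case analysis is needed. Write $n=|V(G)|$. Since $G$ has a perfect matching, $n=2k$ is even and $p(G,k)\ge 1$, so the constant term $(-1)^kp(G,k)$ of $\mu(G,x)$ is nonzero; hence $0$ is not a zero. By Theorem A every zero of $\mu(G,x)$ is real, and the zeros come in pairs $\pm\theta$, so $\mu(G,x)=\prod_{i=1}^{k}(x^2-\theta_i^2)$ with $0<\theta_1\le\cdots\le\theta_k$. Comparing with $\mu(G,x)=\sum_{r=0}^{k}(-1)^rp(G,r)x^{2k-2r}$ shows that the $r$-th elementary symmetric polynomial of $\theta_1^2,\dots,\theta_k^2$ equals $p(G,r)$ for each $r$; in particular $\sum_{i=1}^{k}\theta_i^{-2}=e_{k-1}(\theta^2)/e_k(\theta^2)=p(G,k-1)/p(G,k)$.

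Next I would prove the inequality $p(G,k-1)\ge k\,p(G,k)$. The map sending a pair $(M,e)$, with $M$ a perfect matching of $G$ and $e\in M$, to the $(k-1)$-matching $M\setminus e$ is injective: the two vertices left uncovered by $M\setminus e$ are exactly the endpoints of $e$, so $e$ and hence $M=(M\setminus e)\cup\{e\}$ are recovered from $M\setminus e$; the domain has $k\,p(G,k)$ elements and the image consists of $(k-1)$-matchings. Therefore $\sum_{i=1}^{k}\theta_i^{-2}=p(G,k-1)/p(G,k)\ge k$. This is a sum of exactly $k$ positive numbers that is at least $k$, so some summand is $\ge 1$, i.e. $\theta_i\le 1$ for some $i$, whence $\theta_1\le 1$. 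Thus $\theta_1\in(0,1]$ is a zero of $\mu(G,x)$, which is the first assertion.

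For the characterization, suppose $\mu(G,x)$ has no zero in $(0,1)$; then $\theta_i\ge 1$, hence $\theta_i^{-2}\le 1$, for every $i$, so $k\ge\sum_{i=1}^{k}\theta_i^{-2}=p(G,k-1)/p(G,k)\ge k$. Equality forces each $\theta_i^{-2}=1$, so $\theta_i=1$ for all $i$ and $\mu(G,x)=(x^2-1)^k$; reading off the coefficient of $x^{2k-2}$ gives $|E(G)|=p(G,1)=e_1(\theta^2)=k=n/2$. A graph with a perfect matching has at least $n/2$ edges, with equality exactly when its edge set is that matching, so $G$ is a disjoint union of copies of $K_2$. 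I do not expect a real obstacle here: the combinatorial inequality $p(G,k-1)\ge k\,p(G,k)$ and the pigeonhole principle do all the work, and the only point needing a word of care is that $0$ is not a zero of $\mu(G,x)$ (so that no spurious factor of $x$ appears in the factorization), which is immediate from the nonvanishing constant term.
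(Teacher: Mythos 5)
Your proof is correct and follows essentially the same route as the paper: both extract $\sum_i \theta_i^{-2}=p(G,k-1)/p(G,k)$ from the factorization of $\mu(G,x)$, prove $p(G,k-1)\ge k\,p(G,k)$ by the same injective map $(M,e)\mapsto M\setminus e$, and settle the equality case by counting edges. Your write-up just makes the elementary-symmetric-function bookkeeping and the injectivity argument more explicit.
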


\begin{proof} Let $G$ be a graph on $2n$ vertices. Since $G$ has a perfect matching we have $p(G,n)\neq 0$, consequently
$$\mu(G,x)=\prod_{i=1}^n(x^2-\theta_i^2),$$
where $\theta_i\neq 0$. Then
$$\frac{p(G,n-1)}{p(G,n)}=\sum_{i=1}^n\frac{1}{\theta_i^2}.$$
Next we show that
$$\frac{p(G,n-1)}{p(G,n)}\geq n.$$
Indeed, every perfect matching contains exactly $n$ matchings of size $n-1$, and every matching of size $n-1$ can be extended to a perfect matching in at most $1$ way. Hence
$$n\leq  \frac{p(G,n-1)}{p(G,n)}=\sum_{i=1}^n\frac{1}{\theta_i^2}$$
implies that $\min_i\theta_i^2\leq 1$, and if $\min_i\theta_i^2=1$ then $\theta_1^2=\dots =\theta_n^2=1$. Hence the graph has $\theta_1^2+\dots +\theta_n^2=n$ edges which form a perfect matching, i. e., $G$ is the disjoint union of some $K_2$.
\end{proof}

We offer one more theorem in the same spirit. Let
$$f(t)=\left\{ \begin{array}{ll} t+1 & \mbox{if}\ t\neq 1, \\ 3 & \mbox{if}\ t=1. \end{array} \right. $$
The proof of the following theorem is practically the same as the proof of Theorem~\ref{perfect}. The only extra observation one needs is that a graph on $t+2$ vertices without a $2$--matching contains at most $f(t)$ edges.

\begin{thm} Let $G$ be a graph with at least one edge. Assume that the multiplicity of $0$ as a zero
 of the matching polynomial of the graph $G$ is $t$. Then the interval $(0,\sqrt{f(t)}]$ contains a zero of the matching polynomial of $G$.
\end{thm}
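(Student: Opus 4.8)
The plan is to imitate the proof of Theorem~\ref{perfect} as closely as possible, replacing the role of the perfect matching by the largest matching of $G$. Write $2s = n - t$, where $n = |V(G)|$, so that the largest matching of $G$ has size $s$ (since the multiplicity of $0$ as a zero of $\mu(G,x)$ equals $n - 2\nu(G)$, where $\nu(G)$ is the matching number). Then $p(G,s) \neq 0$ and $p(G,r) = 0$ for $r > s$, so
$$\mu(G,x) = x^t \prod_{i=1}^s (x^2 - \theta_i^2),$$
where each $\theta_i \neq 0$. Comparing the coefficients of $x^{t+1}$ and $x^{t+2}$ (equivalently, reading off $p(G,s)$ and $p(G,s-1)$) gives
$$\frac{p(G,s-1)}{p(G,s)} = \sum_{i=1}^s \frac{1}{\theta_i^2}.$$

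The heart of the argument is the lower bound $\frac{p(G,s-1)}{p(G,s)} \ge \frac{s}{f(t)}$. First I would establish the promised combinatorial fact: a graph on $t+2$ vertices with no $2$-matching has at most $f(t)$ edges. Indeed, such a graph has all edges pairwise incident, so (being simple) it is either a star or a triangle; a star on $t+2$ vertices has $t+1$ edges, while a triangle (possible only when $t+2 = 3$, i.e.\ $t = 1$) has $3$ edges, which explains the definition of $f$. Now count incidences between maximum matchings $M$ (of size $s$) and matchings $M'$ of size $s - 1$: every maximum matching contains exactly $s$ submatchings of size $s-1$, giving $s \cdot p(G,s)$ pairs. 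Conversely, fix a matching $M'$ of size $s-1$ and let $U$ be the set of $n - 2(s-1) = t+2$ vertices it misses. Any maximum matching containing $M'$ must use exactly one more edge, and that edge lies inside $U$; since $M'$ is \emph{not} extendable to a matching of size $s+1$, the induced subgraph $G[U]$ has no $2$-matching, hence at most $f(t)$ edges. Therefore each $M'$ is contained in at most $f(t)$ maximum matchings, so $s \cdot p(G,s) \le f(t)\cdot p(G,s-1)$, which is the desired inequality.

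Combining, $\sum_{i=1}^s \theta_i^{-2} = \frac{p(G,s-1)}{p(G,s)} \ge \frac{s}{f(t)}$, so $\min_i \theta_i^2 \le f(t)$, i.e.\ the interval $(0,\sqrt{f(t)}\,]$ contains a zero $\theta_i$ of $\mu(G,x)$. Since $G$ has at least one edge we have $s \ge 1$, so the product is nonempty and this conclusion is non-vacuous. (One subtlety: if $s = 1$, there are no matchings of size $s-1 = 0$ other than the empty one, and the counting reads $1 \cdot p(G,1) \le f(t) \cdot p(G,0) = f(t)$; here $p(G,1) = |E(G)|$ and $U = V(G)$ has $t+2$ vertices with no $2$-matching, so indeed $|E(G)| \le f(t)$ — the argument degenerates gracefully.)

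The main obstacle — really the only point requiring care — is the edge bound for a graph on $t+2$ vertices with no $2$-matching, together with handling the exceptional value $t = 1$ cleanly: one must notice that the triangle is the unique graph beating the star bound $t+1$, and that it occurs only for $t=1$, which is exactly why $f(1) = 3$ rather than $2$. Everything else is a verbatim transcription of the proof of Theorem~\ref{perfect}, so I would keep the write-up short and simply point to that proof for the parts that are identical.
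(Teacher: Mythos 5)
Your proof is correct and is exactly the argument the paper intends: the paper does not write out a proof but states that it is ``practically the same as the proof of Theorem~\ref{perfect}'' plus the observation that a graph on $t+2$ vertices with no $2$-matching has at most $f(t)$ edges, and you have filled in precisely those details (the double count $s\cdot p(G,s)\le f(t)\cdot p(G,s-1)$ over the $t+2$ vertices missed by a matching of size $s-1$). One harmless slip: for $t\ge 2$ a triangle plus $t-1$ isolated vertices is also a graph on $t+2$ vertices with pairwise intersecting edges, but it has only $3\le t+1$ edges, so your bound $f(t)$ is unaffected.
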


\section{Matching Integral Claw-free Graphs}

In this section we study matching integral claw-free graphs.

\begin{thm} \label{claw-free} Let $G$ be a connected matching integral claw-free graph. Then $G$ is one of $K_1,K_2$ or $G_2$.
\end{thm}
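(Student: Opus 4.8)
The plan is to combine the structural constraints on claw-free graphs with the two characterization results already proved: Theorem~\ref{2.1} (matching integral traceable graphs) and Lemma~\ref{2.9} (cycles are not matching integral), together with the interval bound of Theorem A. First I would dispose of the small cases: if $G$ has at most two vertices it is $K_1$ or $K_2$, so assume $|V(G)|\geq 3$. Since $G$ is connected and claw-free, I would invoke the Hamiltonicity theory of claw-free graphs; a foundational result here is that every connected, locally connected claw-free graph is traceable, and more relevantly every $2$-connected claw-free graph is Hamiltonian (Oberly--Sumner, or the classical result of Matthews--Sumner that $2$-connected claw-free graphs with $\delta\geq (n-2)/3$ are Hamiltonian). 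So the strategy splits on connectivity.

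If $G$ is $2$-connected, then (for claw-free graphs of the relevant sizes) $G$ is Hamiltonian, hence traceable, so by Theorem~\ref{2.1} it is one of $K_1,K_2,K_7\setminus(E(C_3)\cup E(C_4)),G_1,G_2$; I would then check by hand that among these only $G_2$ is both claw-free and $2$-connected — $G_1$ has a pendant vertex (not $2$-connected), $K_7\setminus(E(C_3)\cup E(C_4))$ contains an induced $K_{1,3}$ (it is $4$-regular on $7$ vertices, and one verifies a claw), and $G_2$ is the triangle with two pendant edges... wait, $G_2$ as drawn has a pendant path of length $2$, so it is not $2$-connected either. So in the $2$-connected case the only survivor might be $C_3=K_3$ itself, which is ruled out by Lemma~\ref{2.9}; I must be careful here and recheck which of the five graphs in Theorem~\ref{2.1} are $2$-connected and claw-free — this bookkeeping is one place errors could creep in.

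If $G$ is not $2$-connected (but connected with $\geq 3$ vertices), it has a cut vertex. Here I would use the claw-free condition at the cut vertex $v$: the neighborhood $N(v)$ meets at least two components of $G-v$, and since $G[N(v)]$ has no independent set of size $3$ (claw-freeness), $N(v)$ splits into at most two cliques, so $v$ separates $G$ into exactly two blocks and within each block $v$ together with a clique of its neighbors sits. Iterating, $G$ is built from cliques glued along cut vertices in a path-like (block-path) fashion. Such a graph is traceable precisely when its block-cut tree is a path and each block is small; more simply, any connected claw-free graph that is \emph{not} traceable would contain enough branching to force a claw or a large matching. The cleaner route: show directly that a connected claw-free graph on $n\geq 3$ vertices is traceable, then finish with Theorem~\ref{2.1} and the claw-free/Lemma~\ref{2.9} filtering as above — so the whole proof reduces to one traceability lemma plus a finite check.

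The main obstacle is establishing the traceability (or near-traceability) of connected claw-free matching integral graphs without assuming $2$-connectivity: general connected claw-free graphs need not be traceable (e.g. a "spider" of three paths glued at a center is claw-free but not traceable), so one genuinely needs the matching-integral hypothesis — via Theorem A the number of distinct matching zeros, hence by Theorem C(b) the value $t(G)$, is bounded by $2\lfloor 2\sqrt{\Delta-1}\rfloor+1$, which together with a lower bound on $t(G)$ for connected claw-free graphs (again greedily extending a longest path, using that a longest path's endpoint has all neighbors on the path and claw-freeness forces consecutive such neighbors) should pin down $\Delta$ and then $n$ to a short finite list, after which direct computation of matching polynomials finishes it. Reconciling the path-length lower bound with the zero-count upper bound, and then handling the residual finite family, is where the real work lies.
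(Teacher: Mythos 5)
There is a genuine gap, and it sits exactly where the theorem's difficulty lies. Your proposed reduction to traceability does not go through: the statement ``every $2$-connected claw-free graph is Hamiltonian'' is false (the Matthews--Sumner result you cite needs the hypothesis $\delta\geq (n-2)/3$, and the general $2$-connected case is not a theorem), and, as you yourself observe, connected claw-free graphs need not be traceable. So Theorem~\ref{2.1} cannot be applied to $G$ itself, and the entire content of Theorem~\ref{claw-free} is the non-traceable case, where $\mu(G,x)$ may have repeated zeros. Your fallback plan in the last paragraph --- squeeze $\Delta+1\leq t(G)\leq 2\lfloor 2\sqrt{\Delta-1}\rfloor+1$ and then ``handle the residual finite family by direct computation'' --- correctly reproduces the paper's starting point (the lower bound $t(G)\geq \Delta+1$ comes from applying Chv\'atal--Erd\H os to the neighborhood of a maximum-degree vertex, which has independence number at most $2$ by claw-freeness), but the residual family is \emph{not} finite: this squeeze bounds $\Delta$, not $n$. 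For instance with $\Delta=4$ the candidate polynomial is $x(x^2-1)^{\alpha}(x^2-4)(x^2-9)$ with $\alpha$ unbounded a priori, corresponding to graphs with arbitrarily many pendant vertices, so there is no short list to check.

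The missing ideas, which constitute most of the paper's proof, are: (i) a connected claw-free graph has a matching missing at most one vertex, so $0$ has multiplicity at most $1$, and Theorem~\ref{perfect} disposes of the even-order case entirely (giving $K_2$); (ii) the structure of $G\setminus (N(v)\cup\{v\})$ --- shown via claw-freeness and the long-path bound to be a disjoint union of $K_1$'s and $K_2$'s --- combined with Theorem D forces every zero of multiplicity at least $2$ to lie in $\{-1,0,1\}$, which pins the matching polynomial down to $x(x^2-1)^{\alpha}\prod(x^2-j^2)$ for the surviving values $\Delta\in\{4,6,8\}$ (the cases with $s(\Delta)=\Delta+2$); (iii) the identities $\sum a_i=n$, $\sum ia_i=2m$, $\sum i^2a_i=m(m+1)-2p(G,2)$ on the degree distribution, which yield contradictions for $\Delta=6,8$ and force seven degree-$4$ vertices plus pendants for $\Delta=4$; and (iv) a Hamiltonian-cycle argument inside the $7$-vertex core to produce a path on $8>s(4)$ vertices, killing $\Delta=4$. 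None of these steps is a routine finite verification, and your outline contains no substitute for any of them.
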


Note that from the list of traceable matching integral graphs $G_1$ and $K_7\setminus \left(E(C_3)\cup E(C_4)\right)$ are not claw-free. \\

The proof of Theorem~\ref{claw-free} is quite long, and so we summarize here the plan of the proof. First we show that a connected claw-free graph always contains a matching which avoids at most one vertex, so if $G$ has even order then it contains a perfect matching, and if it has odd order then the largest matching avoids exactly one vertex. This settles the multiplicity of $0$ as a zero of the matching polynomial. If $G$ contains a perfect matching then Theorem~\ref{perfect} already gives that $G$ is $K_2$. Then just as in case of regular graphs we try to find long paths in the graph. We fix the largest degree $\Delta$ and as we gain more and more information about the length of the longest path we exclude the possibility of more and more values of $\Delta$. By the time we stuck with the ideas of finding long paths we will have enough information about the structure of $G$ so that we get many information about the matching polynomial. This way we can shrink the set of possible matching polynomials. Then we translate it back to structural information about the graph and we finish the proof. \\

\begin{lem} Let $G$ be a connected claw-free graph. Then it contains a matching which avoids at most one vertex.
\end{lem}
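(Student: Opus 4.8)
The statement says precisely that the deficiency of $G$ --- the number of vertices missed by a maximum matching --- is at most $1$, so by the Tutte--Berge formula it is enough to prove that $o(G-S)-|S|\le 1$ for every $S\subseteq V(G)$, where $o$ counts odd components. I would argue by contradiction: assume the contrary and fix a set $S$ of \emph{smallest cardinality} with $o(G-S)-|S|\ge 2$. Note $S\ne\varnothing$, since otherwise $G$ has at least two components, contradicting connectedness. The whole point is that claw-freeness severely restricts how a vertex of $S$ sits relative to the components of $G-S$, and this will clash with a short counting argument.

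There are two local facts to establish. \emph{(i) Every $v\in S$ has neighbours in at most two components of $G-S$}: three neighbours of $v$ lying in three different components would form an independent set of size three in $N(v)$ (vertices of distinct components are non-adjacent), i.e.\ an induced claw at $v$. \emph{(ii) Every $v\in S$ has neighbours in at least two odd components of $G-S$}: if $v$ met at most one odd component, then putting $v$ back into the graph leaves all but at most one of the $o(G-S)$ odd components untouched, whence $o\big(G-(S\setminus\{v\})\big)\ge o(G-S)-1$ and therefore $o\big(G-(S\setminus\{v\})\big)-|S\setminus\{v\}|\ge 2$, contradicting the minimality of $|S|$. Putting (i) and (ii) together, every $v\in S$ has neighbours in exactly two components of $G-S$, and both are odd; hence $G-S$ has no even component (an even component would need a neighbour in $S$, impossible by the parity), and $G-S=O_1\sqcup\dots\sqcup O_q$ with all $O_i$ odd and $q=o(G-S)\ge |S|+2$.

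To close, I would build a multigraph $H$ on the vertex set $\{O_1,\dots,O_q\}$, adding for each $v\in S$ a single edge between the two components that $v$ meets; thus $|E(H)|=|S|$. The decisive step is that $H$ is connected. For this, note first that an edge $vv'$ with $v,v'\in S$ can only occur when $v$ and $v'$ meet a common component: otherwise $v'$ together with one neighbour of $v$ in each of $v$'s two components is an independent triple in $N(v)$, a claw at $v$. Now take the $H$-component of $O_1$, say it consists of the components in a set $\mathcal I$, and let $W$ be the union of these $O_i$ together with all vertices of $S$ meeting one of them; the previous remark, together with fact~(i), shows that no edge of $G$ leaves $W$, so by connectedness of $G$ we get $W=V(G)$ and $H$ is connected. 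But then $|S|=|E(H)|\ge |V(H)|-1=q-1\ge |S|+1$, a contradiction. (Parity of $|V(G)|$ is never used, so ``at most one uncovered vertex'' holds in all cases.) The main obstacle is exactly this connectedness of $H$: it is where claw-freeness has to be used a second time, and one must check carefully that an internal edge of $S$ cannot bridge two prospective $H$-components; the bookkeeping of odd components in (ii) is the other place where some care is needed, though it is routine.
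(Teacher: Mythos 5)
Your proof is correct, and it is genuinely different from what the paper does: the paper does not prove this lemma at all, but simply cites the literature (the book of Akiyama and Kano, and the original results of Las Vergnas and Sumner, whose classical arguments are inductive and produce the matching directly, e.g.\ by matching a leaf-block vertex far from a root and recursing). Your route through the Tutte--Berge formula is a clean, self-contained alternative, and all the key steps check out: fact (i) is the standard claw at $v$ formed by neighbours in three distinct components; fact (ii) correctly uses the minimality of $S$ (including the degenerate case where $v$ has no neighbour in $G-S$ and becomes a new odd singleton component); the conclusion that every $v\in S$ meets exactly two components, both odd, does rule out even components by connectedness; and the auxiliary multigraph $H$ with $q\ge |S|+2$ vertices and only $|S|$ edges cannot be connected, while your second use of claw-freeness (an $S$--$S$ edge forces a shared component, since otherwise $v'$ and one neighbour of $v$ in each of its two components form an independent triple in $N(v)$) correctly shows that the set $W$ absorbs all of $G$, forcing $H$ to be connected. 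What your approach buys is a short, verifiable proof that could be included in the paper in place of the citation; what the cited approach buys is, in Sumner's form, the slightly stronger and more quotable statement that every connected claw-free graph of even order has a perfect matching, from which the odd case follows by deleting a suitable vertex. One small presentational point: it is worth stating explicitly that for $v\in W\cap S$ \emph{both} components met by $v$ lie in $\mathcal I$ (they are joined by the $H$-edge of $v$), since that is what makes the shared component $O_j$ in your $S$--$S$ edge case land inside $\mathcal I$.
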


\begin{proof} This is a well-known statement which can be found in the book of J. Akiyama and M. Kano \cite{AK}.
See also \cite{L,S}.
\end{proof}

Next we start our hunting to long  paths. Our main tool is the following lemma and its corollary.

\begin{lem} \label{claw-free-neighbor} Let $H$ be a graph such that $H$ has largest independent set of size at most $2$. Then $H$ has either a Hamiltonian  cycle or there are two vertex-disjoint cliques in $H$ covering all vertices of $H$.
\end{lem}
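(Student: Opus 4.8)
The plan is to argue by induction on the number of vertices of $H$, exploiting that "independence number at most $2$" is equivalent to saying the complement $\overline{H}$ is triangle-free. First I would dispose of the small cases: if $|V(H)|\le 2$ the statement is immediate (a single vertex is a clique, two vertices form either one clique or two singleton cliques), so assume $|V(H)|\ge 3$. The standard dichotomy to invoke is connectivity of $\overline{H}$. If $\overline{H}$ is disconnected, then $V(H)$ splits into two nonempty parts $A,B$ with every $A$--$B$ pair adjacent in $H$; since $\overline{H}$ restricted to $A$ and to $B$ is still triangle-free, each of $A$ and $B$ induces in $H$ a graph with independence number at most $2$. One can then either recurse on $A$ and $B$ to cover each by at most two cliques and try to merge across the complete bipartite join, or — more cleanly — observe that in this "join" situation $H$ is $2$-connected with high minimum degree and push toward a Hamiltonian cycle directly. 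If $\overline{H}$ is connected, then $\overline{H}$ is a connected triangle-free graph on at least $3$ vertices, hence contains an induced path on three vertices $u$--$v$--$w$ (with $uw\notin E(\overline{H})$, i.e. $uw\in E(H)$).

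The main work, and the step I expect to be the real obstacle, is producing the Hamiltonian cycle when no clique cover into two parts exists. Here I would reach for a Chvátal–Erdős type argument: a graph with connectivity $\kappa(H)$ at least its independence number $\alpha(H)$ is Hamiltonian, and here $\alpha(H)\le 2$, so it suffices to show $H$ is $2$-connected. Thus the proof organizes around: (i) if $H$ is not $2$-connected, show directly that $H$ is the union of two cliques — a cut vertex $v$ of $H$ separates $V(H)\setminus\{v\}$ into parts with no edges between them, but non-adjacency being "rare" (complement triangle-free) forces each side together with $v$ to be a clique and in fact forces only two sides, each a clique, overlapping in $v$ (and then one can even drop $v$ from one side); (ii) if $H$ is $2$-connected, invoke Chvátal–Erdős (or Dirac-type degree conditions, noting $\alpha(H)\le 2$ forces $\delta(H)\ge (|V(H)|-1)/2$ roughly, since a vertex of small degree has a large non-neighborhood which must then be a clique and yields an independent pair otherwise) to get a Hamiltonian cycle.

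So the skeleton is: reduce to $|V(H)|\ge 3$; split on whether $H$ is $2$-connected; in the non-$2$-connected case use a cut vertex plus the triangle-free complement to peel off exactly two cliques covering $V(H)$; in the $2$-connected case use $\kappa(H)\ge 2\ge\alpha(H)$ and Chvátal–Erdős to get Hamiltonicity. The delicate point is the non-$2$-connected case: I must check that a single cut vertex cannot leave three or more "sides," which follows because two vertices from distinct sides are non-adjacent, so three sides would give an independent set of size $3$; and that each side $\cup\{v\}$ is a clique, which follows because two non-adjacent vertices inside one side, together with any vertex of another side, again give an independent set of size $3$. Once that bookkeeping is done the rest is routine.
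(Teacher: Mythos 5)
Your proposal follows essentially the same route as the paper: apply the Chv\'atal--Erd\H{o}s theorem with $s=2$ when $H$ is $2$-connected, and otherwise analyze a cut vertex (or disconnection) using the fact that three pairwise non-adjacent vertices cannot exist, concluding that the pieces are two cliques. One small correction to your bookkeeping: it is not true that \emph{each} side together with the cut vertex $v$ forms a clique --- only that $v$ is complete to at least one side (if $v$ had a non-neighbor in each side, those two non-neighbors together with $v$ would be an independent set of size $3$), which is exactly what is needed and is what the paper states.
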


\noindent To prove Lemma~\ref{claw-free-neighbor} we will use the following theorem of Chv\'atal and Erd\H os \cite{CE}. It can also be found in the book of Bondy and Murty \cite{BM} as Theorem 18.10.\\

\begin{lem}[\cite{CE}] \label{ChEr} Let $G$ be a graph on at least $3$ vertices. If, for some $s$, $G$ is $s$-connected and contains no independent set of more than $s$ vertices, then $G$ has a Hamiltonian cycle. 
\end{lem}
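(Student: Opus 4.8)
The plan is to argue by contradiction using an extremal longest cycle, manufacturing from it an independent set of size $s+1$, which is forbidden by the hypothesis that $G$ contains no independent set of more than $s$ vertices. So I would fix a longest cycle $C$ in $G$, choose a cyclic orientation of it, and suppose toward a contradiction that $C$ is not Hamiltonian, so that some vertex $v$ lies off $C$. For an attachment vertex $u\in V(C)$ I write $u^+$ for its successor along the chosen orientation; the whole argument rests on showing that $v$ together with sufficiently many successors $u_i^+$ is independent. Before anything else I would dispose of the degenerate case in which $G$ is complete (this covers $s=1$, i.e. $\alpha(G)\le 1$, and the boundary case $n=s+1$): there $G$ is Hamiltonian for every $n\ge 3$, so from now on I may assume $n\ge s+2$.

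To produce enough successors I first secure a fan of paths from $v$ into $C$. Since every vertex has degree at least $\kappa(G)\ge s$, the standard longest-path argument (all neighbours of an endpoint of a longest path lie on that path) shows the circumference of $G$ is at least $s+1$, so $|V(C)|\ge s+1$. Because $n\ge s+2$ and $G$ is $s$-connected, any set of fewer than $s$ vertices leaves $G$ connected and cannot exhaust $V(C)$, so the minimum $v$–$V(C)$ separator has size at least $s$; Menger's theorem (fan form, separating the single vertex $v$ from the set $V(C)$) then yields $s$ internally disjoint paths $P_1,\dots,P_s$ from $v$ to distinct vertices $u_1,\dots,u_s$ of $C$, each $P_i$ meeting $C$ only at its endpoint $u_i$. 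The successors $u_1^+,\dots,u_s^+$ are then $s$ distinct vertices of $C$, all different from $v$, and the candidate independent set is $\{v,u_1^+,\dots,u_s^+\}$, of size $s+1$.

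The heart of the proof is to forbid every edge inside this set by exhibiting a cycle longer than $C$, contradicting maximality. If $v$ were adjacent to some $u_i^+$, I would form the cycle that uses the edge $vu_i^+$, then traverses $C$ from $u_i^+$ all the way around back to $u_i$ (using every edge of $C$ except $u_iu_i^+$), and finally returns to $v$ along $P_i$; this incorporates $v$ and the interior of $P_i$, so it is strictly longer. If instead $u_i^+$ were adjacent to $u_j^+$ for $i\ne j$, I would cut $C$ at the two edges $u_iu_i^+$ and $u_ju_j^+$ into two arcs, splice them through the new edge $u_i^+u_j^+$, and bridge across $v$ using $P_i$ and $P_j$; checking the cyclic order of $u_i,u_i^+,u_j,u_j^+$ confirms the resulting closed walk is a genuine cycle covering all of $V(C)$ together with $v$ and the interiors of $P_i,P_j$, again longer than $C$. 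Hence no two members of $\{v,u_1^+,\dots,u_s^+\}$ are adjacent, producing an independent set of size $s+1>s$, contrary to hypothesis; therefore $C$ is Hamiltonian.

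The main obstacle is the bookkeeping in these two rerouting steps. One must verify that each newly built closed walk really is a cycle with no repeated vertex, and this is precisely where the internal disjointness of the $P_i$ and the fact that their interiors avoid $C$ get used; one must also track the cyclic order of the four vertices $u_i,u_i^+,u_j,u_j^+$ so that the two arcs of $C$ are reassembled consistently rather than overlapping. Everything else — the circumference bound and the application of Menger — is routine once the case $n\ge s+2$ has been isolated.
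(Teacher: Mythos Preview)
The paper does not give its own proof of this lemma: it is quoted as the Chv\'atal--Erd\H{o}s theorem with a citation to \cite{CE} (and to Bondy--Murty), and is used as a black box in the proof of Lemma~\ref{claw-free-neighbor}. So there is nothing in the paper to compare your argument against.

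That said, your proposal is the standard proof of the Chv\'atal--Erd\H{o}s theorem and is correct. The outline---take a longest cycle $C$, pick $v\notin V(C)$, use $s$-connectivity and the fan version of Menger's theorem to obtain $s$ internally disjoint $v$--$C$ paths with distinct endpoints $u_1,\dots,u_s$, and then show that $\{v,u_1^{+},\dots,u_s^{+}\}$ is independent by two rerouting arguments---is exactly the original argument of Chv\'atal and Erd\H{o}s. Your handling of the boundary cases ($G$ complete; $n=s+1$) and of the circumference bound $|V(C)|\ge s+1$ needed to apply the fan lemma is fine, and the two ``longer cycle'' constructions you describe are the right ones. The only point that deserves a careful line when you write it out is the one you already flag: in the $u_i^{+}u_j^{+}$ case, fixing the cyclic order and checking that the spliced walk is a simple cycle (using that the interiors of $P_i,P_j$ are disjoint from each other and from $C$). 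None of this is a gap; it is just the bookkeeping you anticipate.
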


\begin{proof}[Proof of Lemma~\ref{claw-free-neighbor}]
Clearly, if $G$ is $2$-connected then with the choice of $s=2$ Lemma~\ref{ChEr} immediately implies our statement. If $G$ is not connected then it is easy to see that $G$ must be the union of two disjoint cliques. While, if $G$ has a cut vertex $v$, then $G-v$ must be the union of two disjoint cliques and $v$ must be adjacent to all elements of at least one of the cliques. So we are done in this case too.
\end{proof}

\begin{lem} \label{long-path2} (a) Let $G$ be a connected  claw-free graph with a vertex $v$ of largest degree $\Delta$. Then for any $u\in N(v)$ there is a path $P_u$ starting at $u$ which covers all vertices of $N(v)\cup \{v\}$.

\noindent (b) We have $t(G)\geq \Delta+1$, and if $V(G)\neq N(v) \cup \{v\}$ then $t(G)\geq \Delta+2$.
\end{lem}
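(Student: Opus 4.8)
The statement I want to prove is Lemma~\ref{long-path2}: in a connected claw-free graph $G$ with a vertex $v$ of maximum degree $\Delta$, for every $u\in N(v)$ there is a path starting at $u$ covering $N(v)\cup\{v\}$, and consequently $t(G)\ge \Delta+1$ (with $t(G)\ge\Delta+2$ when $N(v)\cup\{v\}\neq V(G)$). The key observation is that since $G$ is claw-free, the neighborhood $N(v)$ induces a graph $H:=G[N(v)]$ whose largest independent set has size at most $2$ — three pairwise non-adjacent neighbors of $v$ would form a claw with $v$. So Lemma~\ref{claw-free-neighbor} applies to $H$: either $H$ has a Hamiltonian cycle, or $V(H)$ can be covered by two vertex-disjoint cliques $C_1,C_2$.

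For part (a), fix $u\in N(v)$. In the first case $H$ has a Hamiltonian cycle, so deleting one edge of that cycle incident to $u$ yields a Hamiltonian path of $H$ starting at $u$, say $u=w_1,w_2,\dots,w_\Delta$; then $v$ is adjacent to $w_\Delta$ (it is adjacent to every vertex of $N(v)$), so $u,w_2,\dots,w_\Delta,v$ is the desired path $P_u$ on $N(v)\cup\{v\}$. In the second case, with $V(H)=C_1\sqcup C_2$ two disjoint cliques, I would split into subcases depending on whether $u\in C_1$ or $u\in C_2$; by symmetry say $u\in C_1$. Inside a clique any ordering of the vertices is a Hamiltonian path, so I can write $C_1$ as a path $u=a_1,a_2,\dots,a_p$ ending at an arbitrary vertex $a_p$, insert $v$ after $a_p$ (legal since $v$ is adjacent to $a_p\in N(v)$), and then continue from $v$ into $C_2$, which $v$ also dominates: pick any vertex $b_1\in C_2$, and traverse $C_2$ as a path $b_1,\dots,b_q$. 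The concatenation $u=a_1,\dots,a_p,v,b_1,\dots,b_q$ is a path on $N(v)\cup\{v\}$ starting at $u$. (If $C_2=\emptyset$, i.e. $H$ is itself a clique, just stop after $v$.)

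For part (b): the path $P_u$ produced in (a) has $|N(v)|+1=\Delta+1$ vertices, so $t(G)\ge\Delta+1$ immediately. If $V(G)\neq N(v)\cup\{v\}$, then since $G$ is connected there is an edge from $N(v)\cup\{v\}$ to a vertex $z$ outside this set; I want to choose the path in (a) so that one of its two endpoints is exactly such a vertex with an outside neighbor, so that the path extends by one more vertex to length $\Delta+2$. The endpoint $v$ of $P_u$ is useless for this (all of $v$'s neighbors lie in $N(v)\cup\{v\}$), but the flexibility in choosing $a_p$ (and $b_q$) lets me arrange the non-$v$ endpoint to be a vertex that has a neighbor outside: concretely, pick $z\notin N(v)\cup\{v\}$ adjacent to some $w\in N(v)$, and run the construction of (a) so that $w$ is the final non-$v$ endpoint of $P_u$ — in the Hamiltonian-cycle case take $P_u$ to start at $u$ and, deleting a suitable cycle edge, end at $w$ before appending $v$... wait, that conflicts with $v$ being appended at the end.

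Let me restate the extension argument more carefully. In the clique-cover case $V(H)=C_1\sqcup C_2$, the construction gives $u=a_1,\dots,a_p,v,b_1,\dots,b_q$, whose non-$v$ endpoints are $u$ and $b_q$; both $a_p$ and $b_q$ are free choices within their cliques, so I can choose $b_q=w$ if the outside-adjacent vertex $w$ lies in $C_2$, or re-route so the free endpoint lies in $C_1$ and equals $w$ if $w\in C_1$ — and if $C_2=\emptyset$ then $H$ is a clique and I order it so the $v$-less endpoint is $w$, giving $u,\dots,w,v$, but I want $w$ at the very end, so instead I use $w,\dots,u$ with $u$... no — here I should not have insisted the path starts at $u$ for part (b); part (b) is just about $t(G)$, so I can drop the "starts at $u$" requirement and simply build any Hamiltonian path of $N(v)\cup\{v\}$ with $w$ as an endpoint, then append $z$. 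In the Hamiltonian-cycle case for $H$, I delete an edge of the cycle at $w$ to get a Hamiltonian path of $H$ ending at $w$, append... again $v$ must be appended at an endpoint adjacent to it, which is any vertex, so take the path to be $v, w_1,\dots,w_{\Delta-1}, w$ and then $w,z$: that is $\Delta+2$ vertices. I expect the routine bookkeeping of these endpoint placements to be the only real work; the conceptual content — claw-freeness forces independence number $\le 2$ on $N(v)$, then invoke Lemma~\ref{claw-free-neighbor} — is immediate, and I anticipate no genuine obstacle, just care in the case analysis for where $w$ sits (in $C_1$, in $C_2$, or $H$ a single clique).
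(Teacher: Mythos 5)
Your proof is correct and follows essentially the same route as the paper's: claw-freeness bounds the independence number of the graph induced on $N(v)$ by $2$, Lemma~\ref{claw-free-neighbor} gives a Hamiltonian cycle or a two-clique cover, and in either case one reads off a path starting at $u$ through $N(v)\cup \{v\}$. The only place you overcomplicate matters is part (b): the reason part (a) is stated for \emph{every} $u\in N(v)$ is that if $z\notin N(v)\cup\{v\}$ is adjacent to some $u\in N(v)$ (it cannot be adjacent to $v$), you simply prepend $z$ to $P_u$ to get $\Delta+2$ vertices, with no need to re-engineer which vertex is the path's other endpoint.
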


\begin{proof} Part (b) is an immediate consequence of part (a) since $|N(v)\cup \{v\}|=\Delta+1$, and if $G\neq N(v)\cup \{v\}$ then by the connectedness of $G$ there is a $w$ adjacent to some $u\in N(v)$ so we can extend $P_u$ with $w$.

To prove part (a), let $H$ be the graph induced by the neighbors of $v$. Then $H$ has largest independent set of size at most $2$, otherwise there would be a claw in $G$ with center $v$.  Then by the previous lemma $H$ has either a Hamiltonian cycle or there are two vertex-disjoint cliques in $H$ covering all vertices of $H$. In both cases there is a path $P_u$ starting at vertex $u$ which covers $V(H)\cup \{v\}$.
\end{proof}

After all this praparation we are ready to prove Theorem~\ref{claw-free}.

\begin{proof}[Proof of Theorem~\ref{claw-free}.]
For a positive integer $r$ let $s(r)$ be the number of integers in the interval $(-2\sqrt{r-1},2\sqrt{r-1})$. Clearly, this is $2\lfloor 2\sqrt{r-1}\rfloor +1$ if $r-1$ is not a perfect square, and $4\sqrt{r-1}-1$ if $r-1$ is a perfect square. So far we know that
$$\Delta+1\leq t(G)\leq s(\Delta).$$
From this, it follows that $2\leq \Delta\leq 14$. Furthermore, if $\Delta+1=s(\Delta)$ then $t(G)=\Delta+1$ and then $G= N(v) \cup \{v\}$. In particular, it is traceable, but we already described all traceable claw-free graphs: $K_1,K_2$ and $G_2$ (and their largest degrees are $0,1$ and $3$). The following table shows the values of $s(\Delta)$ and $\Delta+1$. The case $\Delta=13$ contradicts $\Delta+1\leq s(\Delta)$.
This table also shows that we can exclude the possibility of $\Delta
\in \{2,10,12,14\}$, because in these cases $\Delta+1=s(\Delta)$.

\begin{center}
\begin{tabular}{|c|ccccccccccccc|} \hline
$r$ & 2 & 3 & 4& 5 & 6 & 7 & 8 & 9 & 10 & 11 & 12 & 13 & 14  \\ \hline
$2\sqrt{r-1}$  & 2 & 2.82 & 3.46 & 4 & 4.47 & 4.89 & 5.29 & 5.65 & 6& 6.32 & 6.63 & 6.92 & 7.21  \\ \hline
$s(r)$ &  3 & 5 & 7& 7 & 9 & 9 & 11 & 11 & 11& 13 & 13 & 13 & 15  \\ \hline
$s(r)-(r+1)$ &  0 & 1 & 2& 1 & 2 & 1 & 2 & 1 & 0& 1 & 0 & -1 & 0  \\ \hline
\end{tabular}
\end{center}

Our next goal is to show  that it is not possible that $k(\Delta)=1$, and if $k(\Delta)=2$ then the multiplicity of some $\theta_i$ with $|\theta_i|>1$ is at most $1$. Note that $k(\Delta)=1$ if $\Delta\in \{3,5,7,9,11\}$, and $k(\Delta)=2$ if $\Delta \in \{4,6,8\}$.
\medskip

Let $k(r)=s(r)-(r+1)$, it expresses how much longer the longest path can be than $\Delta+1$ if $r=\Delta$.
As before let $v$ be a vertex of degree $\Delta$ and $N(v)$ denotes its set of neighbors.
We will show that if $k(\Delta)=1$ then $G\setminus (N(v)\cup \{v\})$ is an empty graph on some vertices, and if
$k(\Delta)=2$ then $G\setminus (N(v)\cup \{v\})$ is a union of $K_1$ and $K_2$.

For a vertex $u\in N(v)$ let $G_u$ be the graph obtained from $G$ by deleting $N(v)\cup \{v\}$ except $u$. Let $C_u$ be the component of $u$ in $G_u$. We show that if $k(\Delta)=1$ then $|C_u|\leq 2$, and if $k(\Delta)=2$ then $|C_u|\leq 3$. Let $u_1,\dots ,u_r$ be the neighbors of $u$ in $C_u$. The vertices $u_1,\dots ,u_r$ have to form a clique otherwise there would be a claw $u,v,u_i,u_j$ with center $u$ for some non-adjacent $u_i$ and $u_j$. This means that we can add a path $u_1u_2\dots u_r$ to the path $P_u$ covering $N(v)\cup \{v\}$. In particular,  this means that in the case  $k(\Delta)=1$ we have $r\leq 1$, and in the case  $k(\Delta)=2$ we have $r\leq 2$. If
$k(\Delta)=1$ and $r=1$ the vertex $u_1$ cannot have further neighbor in $C_u$ so $|C_u|=2$. Similarly, if $k(\Delta)=2$ and $r=2$ the vertices $u_1$ and $u_2$ cannot have further neighbors in $C_u$: if some $w$ is adjacent to say $u_1$ then $wu_1u_2P_u$ be a path of length $\Delta+4$. If $k(\Delta)=2$ and $r=1$ the vertex $u_1$ can have at most one further neighbor, but not more: if $u_2$ and $u_3$ are adjacent to $u_1$ then they are adjacent to each other too since otherwise we have a claw $u_1,u,u_2,u_3$ with center $u_1$, but then $u_2,u_3,u_1,P_u$ is again a path of length $\Delta+4$. Finally, if $k(\Delta)=2$ and $r=1$, and the vertex $u_1$ has a further neighbor $u_2$, then $u_2$ cannot have further neighbors in $C_u$, so $|C_u|\leq 3$.
This proves that if $k(\Delta)=1$ then $|C_u|\leq 2$, and if $k(\Delta)=2$ then $|C_u|\leq 3$. In particular, it shows that if $k(\Delta)=1$ then $G\setminus (N(v)\cup \{v\})$ is a disjoint union of $K_1$, and if
$k(\Delta)=2$ then $G\setminus (N(v)\cup \{v\})$ is a disjoint union of $K_1$ and $K_2$.
\medskip

Now let $P$ be the path going through $N(v)\cup \{v\}$. If $\theta$ is a zero of $\mu(G,x)$ with multiplicity at least $2$ then by Theorem D we have that $\theta$ is a zero of
$$\mu(G\setminus P,x)=\mu(K_1,x)^{\alpha_1}\mu(K_2,x)^{\alpha_2}=x^{\alpha_1}(x^2-1)^{\alpha_2},$$
where $\alpha_2=0$ if $k(\Delta)=1$.
If $k(\Delta)=1$ then it means that $\theta=0$, and if $k(\Delta)=2$ then it means that $\theta\in \{-1,0,1\}$. Since in a connected claw-free graph there is always a matching which avoids at most $1$ vertex we have that the multiplicity of $0$ as a zero is at most $1$. This means that if $k(\Delta)=1$ then all zeros are simple, but then by Theorem~\ref{2.1} we know the complete list of these graphs. In the list the only claw-free graphs were  $K_1,K_2,G_2$.
\medskip

Next we study the case of $k(\Delta)=2$. Recall that it follows from the table on page 7 that the case $k(\Delta)=2$ implies that $\Delta\in\{4,6,8\}$. Note that in this case all integers in the interval $(-2\sqrt{\Delta-1},2\sqrt{\Delta-1})$ must be a zero of $\mu(G,x)$ since otherwise the number of distinct zeros is at most $s(\Delta)-2\leq \Delta+1$ (the missing zero cannot be $0$ since if $G$ contains a perfect matching, then we are done by Theorem~\ref{perfect}), and so $t(G)=\Delta+1$. In this case Part (b) of Lemma~\ref{long-path2} implies that $V(G)=N(v)\cup \{v\}$, and  Part (a) of this lemma shows that the graph is traceable, and we have discussed this case already.

So we have the remaining cases for $\Delta=4,6,8$:
$$\mu(G,x)=x(x^2-1)^{\alpha}(x^2-4)(x^2-9),$$
$$\mu(G,x)=x(x^2-1)^{\alpha}(x^2-4)(x^2-9)(x^2-16),$$
$$\mu(G,x)=x(x^2-1)^{\alpha}(x^2-4)(x^2-9)(x^2-16)(x^2-25).$$

Next we prove that the cases $\Delta=6,8$ are not possible and in case of $\Delta=4$ there are $7$ vertices of degree $4$, all other vertices are of degree $1$.

Let $a_i$ be the number of vertices with degree $i$. Then $a_i=0$ if $i>\Delta$ or $i=0$ ($G$ is connected), and $\sum a_i=n$ is the number of vertices; $\sum ia_i=2m$, two times the number of edges; and $\sum i^2a_i=\sum d(u)^2=m(m+1)-2p(G,2)$.

In particular, for $\Delta=4$ we have
$$\sum_{i=1}^4a_i=5+2\alpha,\ \ \ \ \sum_{i=1}^4ia_i=26+2\alpha, \ \ \ \sum_{i=1}^4i^2a_i=110+2\alpha.$$
Then
$$0\geq -2(a_2+a_3)=\sum_{i=1}^4(i-1)(i-4)a_i=\left(\sum_{i=1}^4i^2a_i\right)-5\left(\sum_{i=1}^4ia_i\right)+4\left(\sum_{i=1}^4a_i\right)=0,$$
so $a_2=a_3=0$ and $a_4=7$, $a_1=2(\alpha-1)$.

For $\Delta=6$ we have
$$\sum_{i=1}^6a_i=7+2\alpha,\ \ \ \ \sum_{i=1}^6ia_i=58+2\alpha, \ \ \ \sum_{i=1}^6i^2a_i=382+2\alpha.$$
Then
$$0\geq \sum_{i=1}^6(i-1)(i-6)a_i=18,$$
a contradiction.

For $\Delta=8$ we have
$$\sum_{i=1}^8a_i=9+2\alpha,\ \ \ \ \sum_{i=1}^8ia_i=108+2\alpha, \ \ \ \sum_{i=1}^8i^2a_i=1032+2\alpha.$$
Then
$$0\geq \sum_{i=1}^8(i-1)(i-8)a_i=132,$$
again contradiction.
\medskip

Next we eliminate the case $\Delta=4$. In this case we know that $7$ vertices have degree $4$, and the rest of the vertices have degree $1$. Clearly, two degree $1$ vertices cannot be adjacent, because $G$ is connected. One vertex of degree $4$ cannot be adjacent to two vertices of degree $1$ since there would be a claw in the graph. We can assume that there are indeed vertices of degree $1$ since otherwise all zeros would be simple. In this case we know from Theorem~\ref{2.1} that the only candidate is $K_7\setminus E(C_3)\cup E(C_4)$ which is not claw-free. If we delete all degree $1$ vertices we get a graph $G'$ on $7$ vertices where all degrees are $3$ or $4$. Both $3$ and $4$ should exist since we assumed the existence of degree $1$ vertices, and it cannot occur that all degrees are $3$ since the number of edges would be $7\cdot 3/2=10.5$. Clearly, $G'$ is still claw-free as it is an induced subgraph of $G$.

We show that $G'$ contains a Hamiltonian cycle. $G'$ is clearly connected since $G$ is connected. We show that $G'$ is also $2$-connected. Indeed, if there were a cut-vertex $u$ then since the minimum degree of $G'$ is at least $3$, the only possibility is that $G'-u$ has $2$ components of size $3$ and every vertex is connected to $u$, but then $u$ has degree $6$. Then again we use Dirac's theorem, Lemma~\ref{long-cycle}: the graph contains a cycle $C$ of length at least $6$. If the length is $7$ then we are done. If the length of $C$ is $6$, then let the vertices of $C$ be $v_1\dots v_6$ and $v_7$ be the remaining vertex. The degree of $v_7$ is at least $3$, if it is adjacent to $2$ neighboring vertices of $C$ then we can extend $C$ to a Hamiltonian cycle. Since the degree of $v_7$ is at least $3$, the only case we have to consider is when $v_7$ is adjacent to vertices of even indices, or those of odd indices. By symmetry we can assume that $v_7$ is adjacent to $v_1,v_3,v_5$. Next observe that if any $2$ of $v_2,v_4,v_6$ are adjacent then there is a Hamiltonian cycle: if $v_2$ and $v_6$ are adjacent then $v_1v_2v_6v_5v_4v_3v_7v_1$ is a Hamiltonian cycle, the other two cases are symmetric to this one. Since the minimum degree of $G'$ is at least $3$, the vertex $v_2$ has to be adjacent to $v_5$, the vertex $v_4$ has to be adjacent to $v_1$, and $v_6$ has to be adjacent to $v_3$. The obtained graph is actually a $K_{3,4}$ with classes $v_1,v_3,v_5$ and $v_2,v_4,v_6,v_7$. This graph doesn't contain a Hamiltonian cycle, but it contains a lot of claws, for instance $v_1,v_2,v_4,v_6$. If we add one more edge which we can assume to be $v_2v_4$ by symmetry then it will contain a Hamiltonian cycle: $v_1v_7v_3v_6v_5v_4v_2v_1$.

Now we can show a path consisting of $8$ vertices in $G$: start at some pendant vertex, and after jumping to its unique neighbor  go through a Hamiltonian path of $G'$. Hence $t(G)=8>7=s(4)$, so $G$ cannot be matching integral. Hence we eliminated the case $\Delta=4$ too. We are done.
\end{proof}

We conclude  the paper with the following question:\\

\noindent {Question.} Is it true that there are finitely many  matching integral $2$-connected graphs?
\bigskip

\noindent \textbf{Acknowledment.} We thank the referees for their careful reading and helpful comments.
\bigskip


\end{document}